\newtheorem{thm}{Theorem}[section]
\newtheorem{rk}{Remark}
\newtheorem{prop}{Proposition}[section]
\newtheorem{lema}{Lemma}[subsection]
\newcommand{\R}{{\mathbb{R}}}
\newcommand{\T}{{\mathbb{T}}}
\newcommand{\N}{{\mathbb{N}}}
\begin{document}


\title[An example of a map which is $C^2$ robustly transitive but not $C^1$ (robustly transitive)]{An example of a map which is $C^2$ robustly transitive but not $C^1$ (robustly transitive)}

\author[J. Iglesias]{J. Iglesias}
\address{Universidad de La Rep\'ublica. Facultad de Ingenieria. IMERL.
Julio Herrera y Reissig 565. C.P. 11300. Montevideo, Uruguay}
\email{jorgei@fing.edu.uy }


\author[A. Portela]{A. Portela}
\address{Universidad de La Rep\'ublica. Facultad de Ingenieria. IMERL.
Julio Herrera y Reissig 565. C.P. 11300. Montevideo, Uruguay }
\email{aldo@fing.edu.uy }

\date{\today}

\maketitle
\begin{abstract}

The aim of this work is to exhibit an example of an endomorphism of $\T^{2}$ which is $C^2$-robustly transitive
but not $C^1$-robustly
transitive.

\end{abstract}

\section{Introduction }
Robustly transitive maps can be divided into three families: those which are diffeomorphisms, those which are
locally invertible
endomorphisms and
those which are endomorphisms with critical points. Robustly transitive diffeomorphisms have been studied by
many authors
(see \cite{bdp}, \cite{m1}, \cite{m2}),\cite{sh}).
Although many important theorems are known, several problems remain open. Locally invertible endomorphisms have
been less
studied. In \cite{lp}, the authors construct examples and they give necessary conditions that guarantee robust
transitivity
for this class of maps.
In \cite{hg}, examples of this kind of maps in the torus are also constructed, in several homotopy classes and
which are not hyperbolic,
which is something that cannot happen for diffeomorphisms of two-dimensional manifolds, since any robustly
transitive diffeomorphism on a surface
must be hyperbolic. The last family of these maps is the one that has been studied the less, to the point
that only examples are known.
In \cite{br}
the authors constructed the first known example of a robustly transitive map with persisting
critical points, and in
\cite{ilp}
examples of maps of this kind in the torus have been constructed in different homotopy classes.
These examples share the property of having unstable cones. Furthermore, if there is a curve such that
its
tangent vector at each point is contained in the correspondient unstable cone, then its length is expanded by
an uniform constant
greater than one. It is conjectured that (in dimension two) every example must have this property, and that
there are no robustly
transitive endomorphisms on the sphere having persistence of critical points.
Up to now no relevant theorems for this family of maps are known.

Let $f$ be an endomorphism in $\T^2$  and
$f_{*}$ the map induced by $f$ in the fundamental group of $\T^2$.
The map $f_{*}$ can be represented by a square matrix of size two by two with integer coefficients.

The examples constructed in  \cite{ilp} are in the following homotopy classes: expanding
(both eigenvalues of $f_{*}$ have modulus greater than one), non-hyperbolic ($f_{*}$ has an eigenvalue
of modulus one) and hyperbolic (one of the eigenvalues of $f_{*}$ has modulus greater than one and the
other one less than one).
Unfortunately in the expanding case there is a mistake. Trying to correct this mistake, we succeeded in
constructing an example
which is $C^2$- but not $C^1$-robustly transitive.
This happens because for a map $f$ (that we will construct) with critical point, it is possible to find a $C^{1}$ perturbed $g$ of $f$ and an open set $V$ such that $int (g(V))=\emptyset$. But this is something that cannot happen for a $C^{2}$ perturbation of $f$.
We will now give an overview of its construction.
 The starting point of this construction can be any matrix with integer eigenvalues
 $\lambda$ and
 $\mu$, with $1<|\mu|  < |\lambda |$.
 In order to simplify the computations, we will start with the matrix
$A=\left(\begin{array}{cc}
  8 & 0  \\
  0 & 2  \\
\end{array}%
\right)$.
We consider the points
$(\frac{1}{16},\frac{1}{4})$, $(\frac{1}{2},\frac{1}{2})$ and
$( 0,0)$ ($A(\frac{1}{16},\frac{1}{4})= (\frac{1}{2},\frac{1}{2})$  and
$A(\frac{1}{2},\frac{1}{2})= (0,0)    )$.
Two perturbations are made in disjoint neighborhoods of the points
$(\frac{1}{16},\frac{1}{4})$ y $(\frac{1}{2},\frac{1}{2})$.
Outside their supports, the map still coincides with the matrix $A$.

The $C^0$ perturbation which is performed in a neighborhood of the point
$(\frac{1}{16},\frac{1}{4})$
yields a map $f$ with persistent critical points. (We will call $S_f$ the set of critical points of $f$.)
Furthermore, $S_f$ is a one-dimensional manifold.

The $C^1$-perturbation which is performed in a neighborhood of the point
$(\frac{1}{2},\frac{1}{2})$
yields a map $f$ for which there exists $\rho>0$ such that
$f(S_f)\cap B((\frac{1}{2},\frac{1}{2}),\rho)=\{(x,\frac{1}{2}): \ x\in (\frac{1}{2}-\rho,\frac{1}{2}+\rho)\}$.
It is then possible to make a $C^1$-perturbation of $f$ that gives a map $g$ for which an open set $V$
satisfies
$g^{2}(V)\subset\{(x,0): \ x\in[0,1]\}$.
Since the set  $\{(x,0): \ x\in[0,1]\}$ is $g$-invariant,
this implies that $g$ is not transitive.
This perturbation can be made $C^2$, and we will show in the last section that the map $f$ thus obtained is
in fact $C^2$-robustly transitive.
\\

The paper is organized as follows. In Section~\ref{critical} we present
some results regarding maps with critical points in
 $\T^{2}$.
In Section \ref{fyh}
we construct the map $h$ which is $C^2$ but not $C^1$ robustly transitive, and in the last section we
prove that $h$ is $C^2$ robustly transitive.

\section{Critical point in $\T^{2}$}\label{critical}

We recall the following facts borrowed from transversality theory:

\begin{enumerate}
\item
 If $M$ and $N$ are manifolds without boundary and $V$ is a submanifold of $N$,
then $f:M\to N$ is transverse to $V$ if $Df_x(T_xM)+T_{f(x)}(V)=T_{f(x)}N$ when $f(x)\in V$.
\item
If $f$ is transverse to $V$ then $f^{-1}(V)$ is a submanifold of $M$ of codimension equal to the
codimension of $V$ in $N$.
\item
The transversality theorem: under the hypothesis above, and if $V$ is closed:
$$
\{f\in C^1(M,N)\ :\ f \mbox{   is transverse to   }V\}
$$
is open and dense in the strong topology.
\end{enumerate}

Let
$\mathcal{M}_{2\times 2}$
the set of square matrixes of dimension two with real coefficients. Let us recall that
$\mathcal{R}_1=\{A\in \mathcal{M}_{2\times 2}: \ A \mbox{ has  rank } 1\}$ is a closed submanifold of $\mathcal{M}_{2\times 2}$ of
codimension one.

\begin{thm}\label{sf}
There exists a residual subset ${\mathcal S}$ of $C^2(\T^{2})$ such that, for every $f\in {\mathcal S}$
the set $S_f$ is a submanifold of $\T^{2}$ of codimension $1$.
\end{thm}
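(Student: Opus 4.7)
The plan is to apply a transversality argument to the derivative map $Df:\T^2\to\mathcal M_{2\times 2}$, which is of class $C^1$ whenever $f\in C^2(\T^2)$. The critical set is the preimage
\[
S_f = (Df)^{-1}(\mathcal R_1\cup\{0\})
\]
of the set of singular matrices, so it suffices to find a residual subset $\mathcal S\subset C^2(\T^2)$ for which $Df$ is transverse to $\mathcal R_1$ and simultaneously avoids the zero matrix. For such $f$, avoidance of $\{0\}$ yields $S_f=(Df)^{-1}(\mathcal R_1)$, and transversality to $\mathcal R_1$, which has codimension $1$, gives, by item~(2) of the excerpt, that $S_f$ is a submanifold of $\T^2$ of codimension $1$.

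The zero-matrix condition is a dimension count: $\{0\}$ has codimension $4$ in $\mathcal M_{2\times 2}$ and $\dim\T^2 = 2 < 4$, so transversality of $Df$ to $\{0\}$ is just the statement that $Df$ misses the zero matrix, and so is generic. The $\mathcal R_1$ condition is the substantive one, and the direct application of item~(3) of the excerpt is not quite the right statement, because $Df$ is not an arbitrary $C^1$-map $\T^2\to\mathcal M_{2\times 2}$: it is constrained to be the derivative of some $f$. The natural fix is a parametric transversality argument. One verifies that the evaluation map
\[
\mathrm{ev}:C^2(\T^2)\times\T^2\longrightarrow\mathcal M_{2\times 2},\qquad (f,x)\longmapsto Df_x,
\]
is a submersion; this follows by modifying $f$ near any fixed $x_0$ by a perturbation of the form $\phi(y)\cdot L(y-x_0)$, where $\phi$ is a smooth bump supported in a small chart around $x_0$ and $L\in\mathcal M_{2\times 2}$ is an arbitrarily prescribed linear map. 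Such a perturbation changes $Df_{x_0}$ by $L$, so the derivative of $\mathrm{ev}$ in the $f$-direction is surjective onto $T_{Df_{x_0}}\mathcal M_{2\times 2}$.

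Once submersivity is in hand, the standard parametric transversality principle (equivalently, Thom's jet transversality theorem applied to the rank-stratification $\Sigma^1,\Sigma^2\subset J^1(\T^2,\T^2)$ corresponding to $\mathcal R_1$ and $\{0\}$) produces a residual subset $\mathcal S_1\subset C^2(\T^2)$ for which $Df$ is transverse to $\mathcal R_1$ and a residual subset $\mathcal S_0$ for which $Df$ misses $\{0\}$. Setting $\mathcal S = \mathcal S_0\cap\mathcal S_1$, which is still residual, gives the desired conclusion.

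The main obstacle is the submersivity check, or equivalently the careful setup of the jet-transversality formalism in the $C^2$ setting. After this point the argument is mechanical: items~(2) and~(3) of the excerpt, together with the dimension count for $\{0\}$, immediately yield the submanifold structure and codimension of $S_f$.
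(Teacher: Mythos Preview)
Your proposal is correct and follows the same approach as the paper's sketch: define $\mathcal{S}$ as those $f$ with $Df$ transverse to $\mathcal{R}_1$ and invoke the preimage theorem. You are more careful than the paper in two places---explicitly handling the rank-zero stratum and noting that item~(3) does not literally apply to the constrained family $\{Df:f\in C^2\}$, supplying instead a parametric/jet-transversality argument---but the core idea is identical.
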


Idea of the proof:
Consider the operator $D$ which assigns to each $f\in C^2$ its derivative $Df:\T^{2}\to \mathcal{M}_{2\times 2}$.
Define
$${\mathcal S}=\{f\in C^2(\T^{2})\ :\ Df \mbox{  is transverse to  } \mathcal{R}_1\}$$
It is not difficult to prove that the set
${\mathcal S}$ is open and dense.
Therefore, using the transversality properties that have been stated above, we have that,
$S_f=Df^{-1}( \mathcal{R}_1)$
is a one-dimensional manifold for every
$f\in {\mathcal S}$.$\Box$

Let $f\in C^{r}(\T^{2})$, $r\geq 1$. 
We say that
$(x_0,y_0)\in S_{f}$ is a critical point of \textbf{fold type} if there exist neighborhoods $U$
and $V,$ of $(x_0,y_0)$ and $f(x_0,y_0)$ respectively, and local diffeomorphisms
$\psi_1:\mathbb{R}^{2}\to U$ and $\psi_2:V\to \mathbb{R}^{2}$ such that
$\psi_2\circ f\circ \psi_1 (x,y)=(x,y^{2})$, for every $(x,y)\in U$.

The proof of the following proposition can be read in
\cite{gg}
\begin{prop}\label{fold}
   Let $f\in C^{2}(\T^{2})$, with  $f\in {\mathcal S}$ and $x\in S_f$. If $T_xS_f$ is transverse to
   $Ker(Df_x)$ then $x$ is a critical point of fold type.
\end{prop}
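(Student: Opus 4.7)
The plan is to reduce $f$ to the normal form $(x,y)\mapsto(x,y^{2})$ by successive local diffeomorphisms of source and target. Since $\mathrm{rank}(Df_{x_0})=1$, I would first choose linear charts near $x_0$ and $f(x_0)$ in which $\ker(Df_{x_0})=\mathrm{span}(\partial_y)$ and $\mathrm{im}(Df_{x_0})=\mathrm{span}(\partial_u)$; writing $f=(f_1,f_2)$, only $\partial_x f_1(x_0)$ is then nonzero among the four first-order partials at $x_0$. Applying the inverse function theorem to $(x,y)\mapsto(f_1(x,y),y)$ gives source coordinates $(\xi,\eta)$ in which $f$ takes the preliminary form
$$f(\xi,\eta)=\bigl(\xi,\,g(\xi,\eta)\bigr),\qquad \partial_\eta g(x_0,y_0)=0.$$

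The next step is to translate the two hypotheses into conditions on $g$. In these coordinates $S_f=\{\partial_\eta g=0\}$ and $\ker(Df)=\mathrm{span}(\partial_\eta)$ along $S_f$. The hypothesis $f\in\mathcal S$, i.e.\ $Df\pitchfork\mathcal R_1$, forces $\nabla(\partial_\eta g)\neq 0$ on $S_f$, so $S_f$ is a regular curve whose tangent at $(x_0,y_0)$ is proportional to $(-\partial^2_{\eta\eta}g,\,\partial^2_{\xi\eta}g)$. The second hypothesis $T_{x_0}S_f\pitchfork\ker(Df_{x_0})$ rules out this tangent lying along $\partial_\eta$, and therefore yields $\partial^2_{\eta\eta}g(x_0,y_0)\neq 0$.

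For the normal-form reduction, the implicit function theorem applied to $\partial_\eta g=0$ produces a function $\phi$ with $S_f$ given locally by $\eta=\phi(\xi)$. The source change $\tilde\eta=\eta-\phi(\xi)$ flattens $S_f$ to $\{\tilde\eta=0\}$ and makes the new second component $\tilde g$ satisfy $\partial_{\tilde\eta}\tilde g(\tilde\xi,0)\equiv 0$. A Hadamard-type integral expansion in $\tilde\eta$ then yields $\tilde g(\tilde\xi,\tilde\eta)-\tilde g(\tilde\xi,0)=\tilde\eta^{2}h(\tilde\xi,\tilde\eta)$ with $h(x_0,0)=\tfrac12\partial^2_{\eta\eta}g(x_0,y_0)\neq 0$. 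Absorbing $\tilde g(\tilde\xi,0)$ into the target coordinate, and composing the target with $v\mapsto -v$ if necessary so that $h>0$, the source map $(\tilde\xi,\tilde\eta)\mapsto\bigl(\tilde\xi,\,\tilde\eta\sqrt{h(\tilde\xi,\tilde\eta)}\bigr)$ has nonsingular Jacobian $\sqrt{h(x_0,0)}\neq 0$ at $(x_0,0)$ and brings $f$ to the desired form $(x,y)\mapsto(x,y^{2})$.

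The conceptual heart of the argument is the identification $\partial^2_{\eta\eta}g(x_0,y_0)\neq 0$, since this is precisely where the two transversality hypotheses cooperate: $f\in\mathcal S$ alone would only give $(\partial^2_{\xi\eta}g,\partial^2_{\eta\eta}g)\neq(0,0)$, and the transversality of $T_{x_0}S_f$ to $\ker(Df_{x_0})$ then selects the nonvanishing second entry. The main technical obstacle I anticipate is tracking the regularity through the successive coordinate changes (the straightening via $\phi$ and the square-root substitution), since $f$ is only assumed $C^{2}$ and one must verify that each substitution is a genuine local diffeomorphism of the appropriate smoothness class.
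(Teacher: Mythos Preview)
The paper does not supply its own proof of this proposition: it simply refers the reader to Golubitsky--Guillemin \cite{gg}. Your proposal is precisely the classical normal-form argument found there (rank reduction to the form $(\xi,g(\xi,\eta))$, identification of the nondegeneracy $\partial^2_{\eta\eta}g\neq 0$ from the two transversality hypotheses, straightening of $S_f$, and the parametrized Morse/Hadamard reduction). So there is no discrepancy in approach to report.

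Your closing caveat about regularity is the one point worth flagging. With $f$ only $C^{2}$, the function $\partial_\eta g$ is merely $C^{1}$, so the implicit function $\phi$ is $C^{1}$; after the Hadamard expansion the factor $h$ is a priori only continuous, and the square-root substitution $(\tilde\xi,\tilde\eta)\mapsto(\tilde\xi,\tilde\eta\sqrt{h})$ need not be a $C^{1}$ diffeomorphism. The statement as used later in the paper (Remark after the proposition, and Remark~\ref{rk1} in the final section) only requires the qualitative fold picture and its $C^{2}$-openness, both of which are standard; but if you want a self-contained proof at exactly the $C^{2}$ level you should either invoke the $C^{r}$ version of the Morse lemma with parameters (which does hold for $r\geq 2$, with the coordinate changes one derivative less smooth), or note that the paper's definition of ``fold type'' does not specify the regularity of $\psi_1,\psi_2$ and argue accordingly.
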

\begin{rk}
It is clear that if
$f\in {\mathcal S}$ and  $x\in S_f$ is such that  $T_xS_f$ is transversal to $Ker(Df_x)$,
then it is possible to find a neighborhood $V_x$ of $x$
and a $C^2$ neighborhood
$\mathcal{U}_f$
of $f$ in such a way that if
$g\in \mathcal{U}_f$ and $y\in S_g\cap V_x$
then
$T_yS_g$ is transverse to $Ker(Dg_y)$.
This implies that every critical point of $g$ belonging to $V_x$ is a critical point of fold type $\forall g\in \mathcal{U}_f$.

\end{rk}

\section{Construction of $f_{\theta\delta}$ and $h$}\label{fyh}

\subsection{Construction of $f_{\theta\delta}$.}\label{f}

The following construction can be carried out for a matrix $A$ whose eigenvalues are different, integers and greater than one.
To make our example easier, we will stick to working with the matrix

$$A=\left(\begin{array}{cc}
  8 & 0  \\
  0 & 2  \\
\end{array}%
\right).$$

Choose $(\frac{1}{16},\frac{1}{4})\in\T^{2}$, note that $A(\frac{1}{16},\frac{1}{4})= (\frac{1}{2},\frac{1}{2})$
and $A^{2}(\frac{1}{16},\frac{1}{4})= (0,0)$. Fix $r>0$  such that

\begin{itemize}\label{p1}
 \item $A(B((\frac{1}{16}, \frac{1}{4}),r))\cap \overline{B((\frac{1}{16}, \frac{1}{4}),r)}=\emptyset $,
 \item $ A^{-1}(B((\frac{1}{2}, \frac{1}{2}),r))\cap \{ (x,0): \ x\in [0,1]          \}=\emptyset$ and
 \item $B((\frac{1}{16}, \frac{1}{4}  ),r)\cap B((\frac{1}{2}, \frac{1}{2}  ),r)=B((\frac{1}{2}, \frac{1}{2}  ),r)\cap B((0, 0  ),r)=$\\
 $B((0,0  ),r)\cap B((\frac{1}{16}, \frac{1}{4}  ),)  = \emptyset.$
 \end{itemize}
This choice of $r$ will play an important role later.

 %
%
%
%

Given $\theta >0$ with $2\theta <r$,  consider
$\psi :\R\to\R$ such that $\psi$ is $C^{\infty}$, $x_0=\frac{1}{16}$ is the unique critical point, $\psi ^{''}(\frac{1}{16})=0$,
$\psi (\frac{1}{16})=4$ and $\psi (x)=0$ for $x$ in the complement of the intervals
$(\frac{1}{16}-\theta ,\frac{1}{16}+\theta  )$, see figure \ref{figura1} (a) .
Given $\delta >0$ with $\delta < 2\theta <r$,  consider  $\varphi:\R\to\R$ be such that
\begin{itemize}
\item $\varphi' $ is as in figure \ref{figura1} (b), $\varphi' (\frac{1}{4})=1/2$, $\varphi (\frac{1}{4})=0$, $\varphi^{''}(\frac{1}{4})\neq 0$ , $\varphi' (\frac{1}{4} +\frac{\delta}{8}  )=1$ and
\item $\varphi (y)=0$ for
$y\notin [\frac{1}{4}-\frac{\delta}{4},\frac{1}{4}+\frac{3\delta}{4} ]$.
\end{itemize}
Note that $max \{ |\varphi (y)|: \ y\in \R \}\leq \delta.$\\ Then define $f_{\theta\delta}:\T^{2}\to \T^{2}$  by
$$ \huge{f_{\theta\delta}(x,y)=(8 x,2 y-\psi (x)\varphi (y)).}$$

\begin{figure}[ht]
\psfrag{1}{\tiny{$1$}}
\psfrag{dd}{\tiny{$\frac{1}{4}+\delta/8$}}
\psfrag{d}{\tiny{$\frac{1}{4}+\delta$}}
\psfrag{c}{\tiny{$2\mu$}}
\psfrag{a}{\tiny{$\frac{1}{16}-\theta$}}
\psfrag{aa}{\tiny{$\frac{1}{16}+\theta$}}
\psfrag{q}{$\psi$}
\psfrag{p}{$\varphi'$}

\psfrag{-1}{\tiny{$-1$}}
\psfrag{12}{\tiny{$\frac{1}{2}$}}

\psfrag{14}{\tiny{$\frac{1}{4}$}}

\psfrag{11}{$\frac{1}{16}$}

\psfrag{4}{\tiny{$4$}}

\begin{center}
\subfigure[]{\includegraphics[scale=0.21]{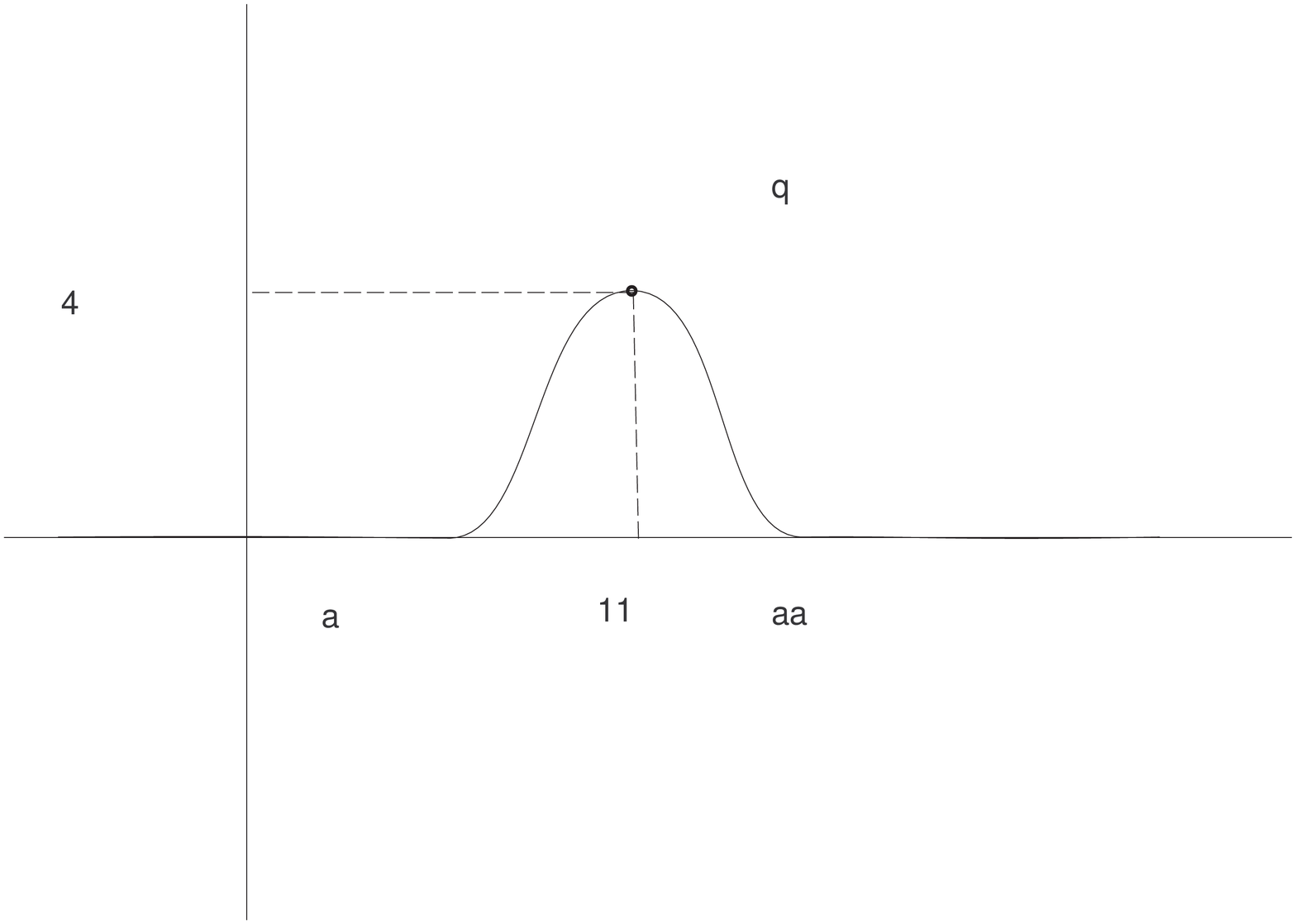}}
\subfigure[]{\includegraphics[scale=0.21]{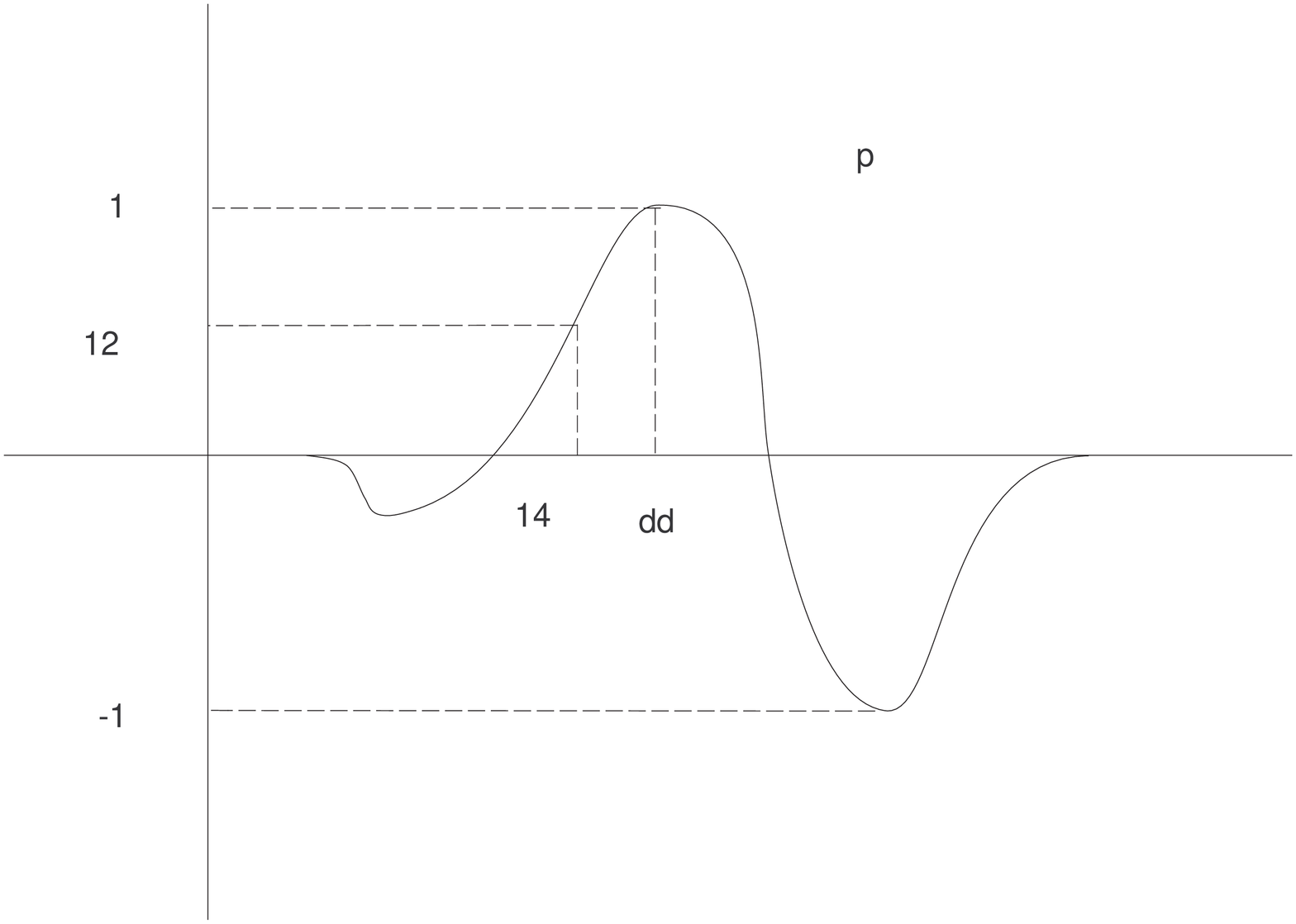}}
\caption{Graph of $\psi$ and $\varphi'$}\label{figura1}
\end{center}
\end{figure}

 For simplicity, sometimes we denote $f_{\theta \delta }=f$, although $f$ depends on the parameters $\theta$ and $\delta$.
Note that $(\frac{1}{16},\frac{1}{4})$ is a critical point and $f(\frac{1}{16},\frac{1}{4})=(\frac{1}{2},\frac{1}{2})$.

\begin{rk}\label{rk1}
   The following properties are very useful for our purpose. Since they are not hard to prove
   we will leave them for the reader to verify.
   \begin{enumerate}
    \item[a)] $f\mid_{\T^{2}\setminus B((\frac{1}{16},\frac{1}{4}),r)} =A_{\T^{2}\setminus B((\frac{1}{16},\frac{1}{4}),r)}.$
    \item[b)] From the definition of $r$, it follows that $f(B((\frac{1}{16},\frac{1}{4}),r))\cap B((\frac{1}{16},\frac{1}{4}),r)=\emptyset$.
    \item[c)] The critical set of $f$ is $S_f=\{(x,y): 2-\psi (x)\varphi' (y)=0 \}$.
    \item[d)]  For $x_0=\frac{1}{16}$ and $y_0=\frac{1}{4}+\delta /8$ we have that $det (Df_{(x_0,y_0)})=-16 $
        and for $(x,y)\in \T^{2}\setminus B((\frac{1}{16},\frac{1}{4}),r)$ we have that  $det (Df_{(x,y)})=16$.
        Then there exists a $C^{1}$-neighborhood  $\mathcal{U}_f$ of $f$ such that
        $S_g\neq\emptyset$ for all $g\in\mathcal{U}_f$.
    \item[e)] $f$ goes to $A$ in the $C^{0}$ topology, when  $\theta$ and $\delta$ go to zero.
    \item[f)]  $f$ is homotopic to $A$.
   \item[g)] $Df$ is transverse to $\mathcal{R}_1$ (here we use that $\varphi^{''}(\frac{1}{4}))\neq 0$ ) , then (by proof of Theorem \ref{sf}) $S_f$ is a manifold of dimension one.

    \end{enumerate}
\end{rk}

Given a positive $a\in \R$  and  $p\in\T^{2}$, we
consider $\mathcal{C}^u_a(p)\subset T_p(\T^{2})$    \emph{the family of unstable cones} defined by
$\mathcal{C}_a^u (p)=\{(v_1,v_2)\in T_p(\T^{2}) : \  |v_2|/|v_1| < a \}$.

The following lemma shows that it is possible to construct a family of unstable cones which is invariant
for the map $f$.

\begin{lema}[Existence of unstable cones for $f$]\label{conosinestables}

Given $\theta >0$, $a>0$, there exist $a_0>0$
and $\delta_0  >0$ with $0<a_0<a$ such that for all $\delta < \delta_0$ if
$f=f_{\theta ,\delta },$ then the following properties hold:
\begin{enumerate}
\item[$(i)$] If $\mathcal{C}_{a_{0}}^u(p)\!\!=\!\!\{(v_1,v_2):   |v_2|/|v_1| <a_0 \},$
        then $\overline{Df_p(\mathcal{C}_{a_{0}}^u(p))}\setminus \{(0,0)\}\subset \mathcal{C}_{{a_{0}}}^u(f(p))$,
        for all  $p\in \T^{2}$;
\item[$(ii)$] If $v\in \mathcal{C}_{a_{0}}^u(p),$ then $|Df_p(v)|\geq 4|v|;$ and
\item[$(iii)$] If $\gamma$ is a small curve such that $\gamma'(t)\subset \mathcal{C}^u_{a_{0}}(\gamma(t)),$
         then $\mathrm{diam}(f(\gamma))\geq 4 \mathrm{diam}(\gamma )$.
\end{enumerate}
\end{lema}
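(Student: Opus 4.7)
The plan is to write out $Df_p$ explicitly and estimate each entry using the qualitative bounds built into the construction of $\psi$ and $\varphi$. From the formula $f(x,y)=(8x,\,2y-\psi(x)\varphi(y))$ one gets
\[
Df_{(x,y)}=\begin{pmatrix} 8 & 0 \\ -\psi'(x)\varphi(y) & 2-\psi(x)\varphi'(y) \end{pmatrix}.
\]
The key uniform bounds I would record first are: $|\psi|\le 4$ and $|\psi'|\le K_\theta$ depending only on the (fixed) parameter $\theta$, and, independently of $\delta$, $|\varphi'|\le 1$ while $|\varphi|\le \delta$. In particular $|2-\psi(x)\varphi'(y)|\le 6<8$ everywhere, and $\psi'(x)\varphi(y)=O(\delta)$ as $\delta\to 0$. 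Outside $B((1/16,1/4),r)$ the map coincides with $A=\mathrm{diag}(8,2)$ by Remark~\ref{rk1}(a), so the verifications there are trivial and the work is all inside the neighborhood.

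For (i), for $v=(v_1,v_2)$ with $v_1\ne 0$ set $s=v_2/v_1$, and let $s'$ be the corresponding slope of $Df_p(v)$. Then
\[
s' \;=\; \frac{-\psi'(x)\varphi(y)+(2-\psi(x)\varphi'(y))\,s}{8},
\qquad
|s'|\;\le\;\frac{K_\theta\,\delta + 6|s|}{8}.
\]
So if $|s|<a_0$ we get $|s'|\le (6a_0+K_\theta\delta)/8$. Choosing first $a_0<\min\{a,\sqrt 3\}$ and then $\delta_0$ so small that $K_\theta\delta_0<2a_0$, we obtain $|s'|<a_0$, which is (i). The case $v_1=0$ does not occur in the cone.

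For (ii), the first coordinate of $Df_p(v)$ is $8v_1$, and for $v\in\mathcal C^u_{a_0}(p)$ one has $|v|\le\sqrt{1+a_0^2}\,|v_1|$, so $|Df_p(v)|\ge 8|v_1|\ge (8/\sqrt{1+a_0^2})\,|v|\ge 4|v|$ provided $a_0\le\sqrt 3$, which was already arranged. For (iii), a curve $\gamma$ small enough to lift to a chart and whose tangent lies in the cone has monotone $x$-projection, hence $|\gamma(s)-\gamma(t)|\le\sqrt{1+a_0^2}\,|\gamma(s)_x-\gamma(t)_x|$ while $|f(\gamma(s))-f(\gamma(t))|\ge 8|\gamma(s)_x-\gamma(t)_x|$; taking sup over $s,t$ yields $\mathrm{diam}(f\gamma)\ge (8/\sqrt{1+a_0^2})\,\mathrm{diam}(\gamma)\ge 4\,\mathrm{diam}(\gamma)$.

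The main point to get right, and the only subtle step, is the $\delta$-independent bound $|\varphi'|\le 1$: it is what prevents $\psi\varphi'$ from blowing up as the support of $\varphi$ shrinks and gives the essential gap $6<8$ in the estimate for (i). Once this is verified from the prescribed data on $\varphi$ in Figure~\ref{figura1}(b), choosing $a_0$ first (small enough for (ii)–(iii) and smaller than $a$) and then $\delta_0=\delta_0(a_0,\theta)$ satisfying $K_\theta\delta_0<2a_0$ closes the argument.
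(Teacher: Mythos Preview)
Your argument is correct and, for part~(i), proceeds exactly as in the paper: compute $Df_{(x,y)}$, bound the image slope by $(K_\theta\delta+6|s|)/8$ using $|\psi|\le 4$, $|\varphi'|\le 1$, $|\varphi|\le\delta$, and then absorb the $\delta$-term by choosing $\delta_0$ small after fixing $a_0$.

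The one notable difference is in~(ii) and~(iii). The paper bounds the full ratio $|Df_p(v)|^2/|4v|^2$ and argues that \emph{both} $a_0$ and $\delta_0$ must be taken small to push it above~$1$. You instead observe that the first coordinate alone already gives $|Df_p(v)|\ge 8|v_1|\ge (8/\sqrt{1+a_0^2})|v|$, so the expansion factor~$4$ follows from the single clean condition $a_0\le\sqrt 3$, independently of~$\delta$; the same projection trick then yields~(iii) directly for diameters rather than via arc length. This is a mild sharpening---it decouples the choice of $a_0$ for expansion from the choice of $\delta_0$ for cone invariance---but the overall strategy is the same.
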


\begin{proof}
Proof of $(i)$. Given $p=(x,y)\in\T^2$ and $a>0,$ pick $a_0$ such that $0<a_0<a.$  Let $v=(v_1,v_2)\in \mathcal{C}_{a_0}^u(p)$. Since

 $$\begin{array}{ll}
                         Df_{(x,y)}(v_1,v_2)&= \left(
                                                 \begin{array}{cc}
                                                 8& 0  \\
                                                 -\psi'(x)\varphi (y) & 2-\psi (x)\varphi' (y)  \\
                                                 \end{array}
                                               \right)
                                                 \left(\begin{array}{c}
                                                     v_1  \\
                                                      v_2  \\
                                                  \end{array}%
                                                   \right) \\ \\
                          & = ( 8 v_1, -\psi'(x)\varphi(y)v_1+ (2-\psi (x)\varphi' (y))v_2),
                       \end{array}
                       $$
then

 \begin{equation}\label{eq1}
 \frac{|-\psi'(x)\varphi (y)v_1+ (2-\psi (x)\varphi' (y))v_2 |}{|8 v_1|}\! \leq\!
 \left|\frac{-\psi'(x)\varphi (y)}{8}\right|\! +\! \left|\frac{2-\psi (x)\varphi' (y)}{8}\right|  \left|\frac{v_2}{v_1}\right|.
\end{equation}

 Let $M=\max\{| \psi'|\}$. Note that $\max\{| \varphi|\}\leq \delta$
 and $|2-\psi (x)\varphi' (y)|\leq |6|.$ Hence, from inequality (\ref{eq1}) follows that
$$     \frac{|-\psi^{'}(x)\varphi (y)v_1+ (2-\psi (x)\varphi' (y))v_2 |}{|8 v_1|} \leq
\frac{M\delta}{8}+\frac{6}{8} \left|\frac{v_2}{v_1}\right|   
<  \frac{M\delta}{8}+\frac{6 a_0}{8}.$$
Taking $\delta _0$ small enough we obtain that
$\frac{M\delta}{|8|}+\frac{|6 |a_0}{|8|}<a_0$ for all $\delta <\delta_0$, which finishes the proof of (i).

\noindent Proof of $(ii)$.  Note that
$$\begin{array}{ll}
    \left (\dfrac{|Df_{(x,y)}(v_1,v_2)|}{|4(v_1,v_2)|} \right )^{2} &= \dfrac{ (8 v_1)^{2} +(-\psi'(x)\varphi (y)v_1+ (2-\psi (x)\varphi' (y))v_2)^{2}}{ (4)^{2}(v_1^{2}+v_2^{2}) } \\ \\
     & = \dfrac{ 8^{2} +(-\psi'(x)\varphi (y)+ (2-\psi (x)\varphi' (y))\frac{v_2}{v_1})^{2}}{ (4)^{2}\left(1+\left (\frac{v_2}{v_1}\right)^{2}\right) }.
  \end{array}
 $$

Taking $\delta_0$  and $a_0$ small enough, we get that $-\psi'(x)\varphi (y)$ is arbitrarily close to zero,
 $(2-\psi (x)\varphi' (y))\frac{v_2}{v_1}$ is also close to
zero  and $\left(1+\left(\frac{v_2}{v_1}\right)^{2}\right)$ is close to one.
Then 
there exist $a_0$ and $\delta_0$, as close to zero as necessary, such that
$$  \left (\frac{|Df_{(x,y)}(v_1,v_2)|}{|4(v_1,v_2)|} \right )^{2}>1,$$
and the thesis follows.

\noindent Proof of $(iii)$. It follows from the previous items.
\end{proof}

Note that 
the properties $(i)$, $(ii)$ and $(iii)$ in the previous lemma are robust.
In concrete we have the following result.

\begin{lema}\label{clly1}
For every $f$ satisfying 
 Lemma \ref{conosinestables}, there exists $\mathcal{U}_f$ a $C^{1}$-neighborhood
 of $f$ such that for every $g\in\mathcal{U}_f$
 the properties $(i)$, $(ii)$ and $(iii)$ of Lemma \ref{conosinestables} hold.
\end{lema}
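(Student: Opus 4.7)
The plan is to show that the three properties are stable under $C^1$-perturbations because they were established as \emph{strict} inequalities with positive slack that is uniform in $p\in\T^2$. Since $\T^2$ is compact and $f=f_{\theta\delta}$ is fixed, the quantities entering the proof of Lemma~\ref{conosinestables} (the partial derivatives that appear as entries of $Df$) are uniformly bounded, and a $C^1$-perturbation $g$ of $f$ produces a $Dg_p$ that is uniformly close (in operator norm) to $Df_p$ over all $p\in\T^2$.

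First I would revisit the estimate for $(i)$. The proof of Lemma~\ref{conosinestables}(i) gave
\[
\sup_{p\in\T^2}\sup_{v\in\mathcal{C}_{a_0}^u(p)\setminus\{0\}}\frac{|\pi_2(Df_p v)|}{|\pi_1(Df_p v)|}\le \frac{M\delta}{8}+\frac{6 a_0}{8} < a_0,
\]
so there is a uniform gap $\eta:=a_0-\bigl(\tfrac{M\delta}{8}+\tfrac{6a_0}{8}\bigr)>0$. Writing $Dg_p$ as $Df_p+E_p$ with $\|E_p\|\le\|g-f\|_{C^1}$, the analogous ratio for $g$ differs from the one for $f$ by a quantity that tends to $0$ uniformly in $p$ as $\|g-f\|_{C^1}\to 0$. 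Choosing $\mathcal{U}_f$ so that this error is smaller than $\eta/2$ guarantees
\[
\overline{Dg_p(\mathcal{C}_{a_0}^u(p))}\setminus\{(0,0)\}\subset \mathcal{C}_{a_0}^u(g(p))
\]
for every $p\in\T^2$ and every $g\in\mathcal{U}_f$, which is (i).

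Next I would do the same with (ii). The proof of Lemma~\ref{conosinestables}(ii) established a \emph{strict} inequality $(|Df_p(v)|/|4v|)^2>1$ by making the perturbative terms negligible against $8^2/4^2=4$, so in fact there is $c>1$ with $|Df_p v|\ge 4c|v|$ uniformly on $\T^2$ and on $\mathcal{C}_{a_0}^u$. A $C^1$-perturbation changes $|Dg_p v|$ by at most $\|g-f\|_{C^1}|v|$, so shrinking $\mathcal{U}_f$ further we obtain $|Dg_p v|\ge 4|v|$ for every $g\in\mathcal{U}_f$, every $p$, and every $v\in\mathcal{C}_{a_0}^u(p)$, which is (ii).

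Finally, (iii) is deduced from (i) and (ii) exactly as in the proof of Lemma~\ref{conosinestables}: if $\gamma$ is a small curve with $\gamma'(t)\in\mathcal{C}_{a_0}^u(\gamma(t))$, then by (i) the tangent of $g\circ\gamma$ still lies in the unstable cone, and by (ii) it has length at least $4|\gamma'(t)|$, so the length of $g(\gamma)$ is at least $4\,\mathrm{length}(\gamma)$, and the boundedness of slopes in $\mathcal{C}_{a_0}^u$ converts this into $\mathrm{diam}(g(\gamma))\ge 4\,\mathrm{diam}(\gamma)$. Taking $\mathcal{U}_f$ to be the intersection of the neighborhoods produced in the two previous paragraphs completes the proof. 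The main potential obstacle is the uniformity in $p$, but this is automatic: $\psi,\varphi$ are smooth and supported in a small region, so $\|Df_p\|$ and $\|D^2 f_p\|$ are uniformly bounded on the compact torus, and the $C^1$-closeness of $g$ to $f$ provides the uniform control on the error.
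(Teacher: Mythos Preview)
Your argument is correct and is exactly the kind of reasoning the authors have in mind: the paper gives no proof of this lemma at all, simply remarking that ``the properties $(i)$, $(ii)$ and $(iii)$ in the previous lemma are robust'' and then stating the result. Your write-up just makes explicit the uniform slack in the strict inequalities of Lemma~\ref{conosinestables} and the fact that a $C^1$-small perturbation moves $Dg_p$ uniformly close to $Df_p$ on the compact torus, which is precisely why the authors felt safe omitting the details.
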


\subsection{Construction of $h$.}\label{h}
Starting from the map $f$ we will construct another map $h$
having unstable cones and for which there exists a $\rho>0$ such that
$h(S_h)\cap B((\frac{1}{2},\frac{1}{2}),\rho)=\{(x,\frac{1}{2}): \ x\in(\frac{1}{2}-\rho,\frac{1}{2}+\rho)\}$.

The following lemma shows that if $(t, \gamma (t))$ is a small curve which is close to the point  $(\frac{1}{2},\frac{1}{2})$ with $\gamma^{'}$ and  $\gamma^{''}$
close to zero, then it is possible to construct a $C^{2}$ perturbation  $F$ of the identity in such a way that
$F(t,\gamma (t))=(t,\frac{1}{2})$
for values of $t$ close enough to $\frac{1}{2}$.

The proof of the last theorem is conceptually simple but cumbersome in terms of calculations

\begin{lema}\label{l11}
 Given $\varepsilon >0$ and $r>0$ there exist $a >0$, $a^{'}>0$, $a^{''}>0$ and $b>0$ such that if $\gamma :[\frac{1}{2}-b,\frac{1}{2}+b]\to [0,1]$ is a
 function of class $C^{2}$ satisfying
 \begin{itemize}
 \item $max\{ |\gamma (x)-\frac{1}{2}|: \ x\in [\frac{1}{2}-b,\frac{1}{2}+b] \}\leq a$,
  \item $max\{ |\gamma ^{'}(x)|: \ x\in [\frac{1}{2}-b,\frac{1}{2}+b] \}\leq a^{'}$ and
  \item $max\{ |\gamma ^{''}(x)|: \ x\in [\frac{1}{2}-b,\frac{1}{2}+b] \}\leq a^{''}$,

 \end{itemize}

 then there exists a diffeomorphisms $F:\T^{2}\to \T^{2}$ such that  $d_{C^{2}}(F,Id)<\varepsilon $, $F|_{B((\frac{1}{2},\frac{1}{2}), r))^{c}}=Id$ and $F(x,\gamma (x))=(x,\frac{1}{2})$ for all $x\in [\frac{1}{2}-b,\frac{1}{2}+b]$.
\end{lema}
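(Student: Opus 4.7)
The approach is to exhibit $F$ explicitly as the identity plus a small vertical shift cut off by bump functions in both variables. The plan is to fix the bumps first (so their $C^2$-norms become constants determined by $r$ and by a free parameter $b$) and then shrink $a, a', a''$ to absorb those constants against $\varepsilon$.

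First, choose $b>0$ with, say, $2b<r/\sqrt 2$, so that the rectangle $R:=[\tfrac12-2b,\tfrac12+2b]\times[\tfrac12-r/2,\tfrac12+r/2]$ lies inside $B((\tfrac12,\tfrac12),r)$. Fix once and for all $C^\infty$ bump functions $\beta,\rho:\R\to[0,1]$ with $\beta\equiv 1$ on $[\tfrac12-b,\tfrac12+b]$, $\operatorname{supp}\beta\subset[\tfrac12-2b,\tfrac12+2b]$, and $\rho\equiv 1$ on $[\tfrac12-r/4,\tfrac12+r/4]$, $\operatorname{supp}\rho\subset[\tfrac12-r/2,\tfrac12+r/2]$. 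Their $C^2$-norms are constants $K=K(b,r)$. Next, $C^2$-extend $\gamma$ to a function $\tilde\gamma$ on $\R$ (for instance by gluing to the constant $\tfrac12$ through a fixed cutoff outside $[\tfrac12-b,\tfrac12+b]$) whose $C^2$-norms are still controlled by $a,a',a''$ times a constant depending only on $b$.

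The perturbation is then
$$P(x,y):=\beta(x)\,\rho(y)\,\bigl(\tfrac12-\tilde\gamma(x)\bigr),\qquad F(x,y):=\bigl(x,\,y+P(x,y)\bigr).$$
Since $\operatorname{supp}P\subset R\subset B((\tfrac12,\tfrac12),r)$, we get $F|_{B((\tfrac12,\tfrac12),r)^c}=\mathrm{Id}$ for free. For $x\in[\tfrac12-b,\tfrac12+b]$ we have $\beta(x)=1$, while the hypothesis $|\gamma(x)-\tfrac12|\le a\le r/4$ forces $\rho(\gamma(x))=1$, so
$$F(x,\gamma(x))=\bigl(x,\,\gamma(x)+(\tfrac12-\gamma(x))\bigr)=(x,\tfrac12),$$
which is the desired straightening property.

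It remains to check that $F$ is a $C^2$-small diffeomorphism. Expanding $\partial^{\alpha}P$ for $|\alpha|\le 2$ by the Leibniz rule splits each term into a product of a derivative of $\beta$ or $\rho$ (bounded by $K$) times a derivative of $\tilde\gamma$ or of $(\tfrac12-\tilde\gamma)$ (bounded by $Ca$, $Ca'$ or $Ca''$). Choosing $a,a',a''<\varepsilon/(CK)$ therefore gives $\|P\|_{C^2}<\varepsilon$, so $d_{C^2}(F,\mathrm{Id})<\varepsilon$. In particular $|\partial_y P|<1$, hence $\det DF=1+\partial_y P>0$, and since $F$ agrees with the identity outside a compact set inside $B((\tfrac12,\tfrac12),r)$, it is a global $C^2$ diffeomorphism of $\T^2$. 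The main obstacle is purely bookkeeping: the bump functions develop derivatives of size $\sim 1/b$ and $\sim 1/r$, so one must respect the order of quantifiers ($r$ is given, then $b$ and the bumps are fixed, and only afterwards are $a,a',a''$ chosen small enough to kill all the constants); once this is done, the proof reduces to the mechanical estimates sketched above.
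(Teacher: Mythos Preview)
Your proof is correct and follows essentially the same route as the paper's: both build $F$ as the identity plus a small vertical shift of the form $y\mapsto y+(\text{bump})\cdot(\tfrac12-\gamma)$, fix the bumps first so their $C^2$-norms become constants depending only on $r$ and $b$, and then shrink $a,a',a''$ to absorb those constants. The only cosmetic difference is that the paper centers its $y$-bump on the moving graph (writing $g(y-f(x))$, with $f$ the $C^2$ extension of $\gamma$) and lets the factor $f(x)-\tfrac12$ itself play the role of the $x$-cutoff, whereas you use two separate fixed bumps $\beta(x)\rho(y)$; either choice works and the estimates are the same in spirit.
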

\begin{proof}

Given $\varepsilon >0 $ and $r>0$ let $\delta_0=r/3$ and let $g:[0,1]\to [0,1]$ be as in Figure \ref{figura3} (b).
Let $M^{'}_{\delta_{0}}=max\{ |g^{'}(x)|: \ x\in [0,1] \}$, $M^{''}_{\delta_{0}}=max\{ |g^{''}(x)|: \ x\in [0,1] \}$  and  $2b<r$.\\ Let
  $a^{''}>0$ be such that
$a^{''}<\varepsilon /3$.\\ Consider $a^{'} >0$ such that:

$$\bullet a^{'}<a^{''}b/2 \ \ \ \  \bullet a^{'}<\varepsilon /2 \ \ \ \  \bullet a^{'} M^{'}_{\delta_{0}}<\varepsilon /2  \ \ \ \  \bullet 2(a^{'})^{2} M^{'}_{\delta_{0}}<\varepsilon .$$

And take $a>0$ such that

$$  \bullet a<\varepsilon /2\ \ \ \ \bullet a<a^{'}b/2  \ \ \ \ \bullet a a^{'}M^{'}_{\delta_{0}}<\varepsilon /4 \ \ \ \  \bullet aM^{'}_{\delta_{0}} <\varepsilon /2 .$$

$$  \bullet a a^{'}M^{''}_{\delta_{0}} <\varepsilon /4 \ \ \ \ \bullet a   ( M^{''}_{\delta_{0}} (a^{'})^{2} +M^{'}_{\delta_{0}} a^{''}     )<  \varepsilon /4  \ \ \ \ \bullet a M^{''}_{\delta_{0}}<\varepsilon /4 \ \ \ \  \bullet a<\delta_0 /2.$$

Then, given $\gamma$ in the hypotheses of the lemma, let $f:[0,1]\to [0,1]$, $f\in C^{2}$,  be such that:

\begin{itemize}
\item $f|_{[\frac{1}{2}-b,\frac{1}{2}+b]}=\gamma$.

\item $f(x)=\frac{1}{2}$ if $x\not\in (\frac{1}{2}-2b, \frac{1}{2}+2b)$.
\item $max\{ |f (x)-\frac{1}{2}|: \ x\in [0,1] \}\leq 2a$.

\item $max\{ |f ^{'}(x)|: \ x\in [0,1] \}\leq a^{'}$ (this is possible because  $a<a^{'}b/2$).
\item $max\{ |f ^{''}(x)|: \ x\in [0,1] \}\leq a^{''}$ (this is possible because  $a^{'}<a^{''}b/2$).
\end{itemize}

 and define $$F(x,y)=( x ,y-(f (x)-\frac{1}{2})g(y-f (x)) ).$$
Note that  $F(x,f(x))=( x ,\frac{1}{2})$ for all $x\in [0,1]$.\\
We begin proving that $d_{C^{0}}(F,Id)<\varepsilon $.
$||F(x,y)-Id(x,y)||=||  (0,-(f(x)-\frac{1}{2})g(y-f(x))       ||$, so as $|g|\leq 1$ and $|f-\frac{1}{2}|\leq 2a<\varepsilon $ this implies
    $d_{C^{0}}(F,Id)<\varepsilon $.\\

$$\left|\frac{\partial}{\partial x}( -(f(x)-\frac{1}{2}) g(y-f (x))\right|= \left|-f^{'} (x)g(y-f (x))   +(f(x)-\frac{1}{2})g^{'}(y-f (x))   f^{'}(x) \right|$$ $$ <a^{'}+2aM^{'}_{\delta_{0}}a^{'}    < \frac{\varepsilon}{2}+\frac{\varepsilon}{2}=\varepsilon .$$

$$\left|\frac{\partial}{\partial y}( -(f(x)-\frac{1}{2}) g(y-f (x))\right|=\left| -(f(x)-\frac{1}{2})g^{'}(y-f (x))\right| <  2aM^{'}_{\delta_{0}}<\varepsilon.$$
This implies
    $d_{C^{1}}(F,Id)<\varepsilon $.\\
The proof that is  $d_{C^{2}}(F,Id)<\varepsilon $ analogous.

We will now prove that  $F|_{B((\frac{1}{2},\frac{1}{2}),r))^{c}}=Id$.
For the sake of convenience we will take  $B((\frac{1}{2},\frac{1}{2}),r)=\{ (x,y): \ |x-\frac{1}{2}|<r,|y-\frac{1}{2}| <r  \}$.

 If $(x,y ) \in( B((\frac{1}{2},\frac{1}{2}),r))^{c}$ then $|x-\frac{1}{2}|>r$ or $|y-\frac{1}{2}|>r$.
 If $|x-\frac{1}{2}|>r $, as $r>2b$ then $f (x)=\frac{1}{2}$, and therefore  $F(x,y)=( x ,y)$.
 If $|y-\frac{1}{2}|>r$, as $r=3\delta_0$ and $|f (x)-\frac{1}{2}|<a<\delta_0$, then $|y-f (x)|>\delta _0$. This in turn
 implies that $g(y-f (x))=0$, and therefore $F(x,y)=( x ,y)$.

\end{proof}

In the sequel, the idea is to apply the preceding lemma to
a $\varepsilon$, an $r$ and a $\gamma$ that we will now define.
We will start by defining $\varepsilon$ and $r$.\\
Let $f_{\theta\delta}$ the map defined in Section \ref{f}.
Lemma \ref{conosinestables} guarantees the existence of  $a_0>0$ for which there are $\theta$ and $\delta$
as small as necessary in such a way that
$\overline{Df_{\theta\delta}(\mathcal{C}_{a_{0}}^u(p))}\setminus \{(0,0)\}\subset \mathcal{C}_{{a_{0}}}^u(f_{\theta\delta}(p))$,
        for all  $p\in \T^{2}$.
By Lemma \ref{clly1} there exists a $C^{1}$-neighborhood  $\mathcal{U}_f{_{_{\theta\delta}}}$ of $f_{\theta\delta}$  such that for all $g\in \mathcal{U}_f{_{_{\theta\delta}}}$  we have that $\overline{Dg_p(\mathcal{C}_{a_{0}}^u(p))}\setminus \{(0,0)\}\subset \mathcal{C}_{{a_{0}}}^u(g(p))$ for all  $p\in \T^{2}$.

        Let $\varepsilon^{'} >0$ be such that if $d_{C^{1}}(g,f_{\theta\delta})<\varepsilon^{'}$ then $g\in \mathcal{U}_f{_{_{\theta\delta}}}$ and let
 $\varepsilon>0$ be such that if  $d_{C^{1}}(F,Id)<\varepsilon$ then $d_{C^{1}}(F\circ f_{\theta\delta},f_{\theta\delta})<\varepsilon^{'}$, so $F\circ f_{\theta\delta}\in  \mathcal{U}_f{_{_{\theta\delta}}}$.

 Let $r>0$ be as defined at the beginning of section \ref{f}. Let us fix $\varepsilon$ and $r$.
 The idea is to apply Lemma
 \ref{l11}, and in order to do this we still have to define the curve $\gamma$. Some preliminary remarks are in order.

Recall that $f_{\theta\delta}(x,y)=f(x,y)=(8x, 2x-\psi (x)\varphi(y)) $ and  $S_f=\{ (x,y): \  2-\psi (x)\varphi ^{'}(y)=0 \}$.
We consider a point $(\frac{1}{16},\frac{1}{4} )$. Recall that $(\frac{1}{16},\frac{1}{4})\in S_f$ and $f_{\theta\delta}(\frac{1}{16},\frac{1}{4})=(\frac{1}{2},\frac{1}{2})$ .

By the implicit function Theorem applied to  $G(x,y)=2-\psi (x)\varphi ^{'}(y)=0$ in the point $(\frac{1}{16},\frac{1}{4})$ (here we using that $\varphi'' (\frac{1}{4})\neq 0$), there exist $\beta >0$ and a function $y:(\frac{1}{16}-\beta, \frac{1}{16}+\beta)\to \R$ such that $G(x,y(x))=0$ for $x\in(\frac{1}{16}-\beta, \frac{1}{16}+\beta)$.
A straightforward computation shows that $y^{'}(1/16)=y^{''}(1/16)=0$ (here we use that $\psi^{'}(1/16)=\psi^{''}(1/16)=0$).

So $f(x,y(x))=(8x, 2y(x)-\psi (x) \varphi (y(x)) )$. By the change of variables  $8x=t$ we obtain $(t, 2y(t/8)-\psi (t/8) \varphi (y(t/8)) )=(t,\gamma_1(t))$. Again a straightforward computation shows that $\gamma^{'}_1(1/2)=\gamma_1^{''}(1/2)=0$ and  $\gamma_1(1/2)= 1/2$.

For $\varepsilon$ a $r$ as above,  let $a,a^{'},a^{''}$ and $b$ as given by the Lemma \ref{l11}. Let $\beta >0$ enough small such that $8\beta <b$ and
\begin{enumerate}
 \item $max\{ |\gamma_1 (t)-\frac{1}{2}|: \ t\in [\frac{1}{2}-8\beta,\frac{1}{2}+8\beta] \}\leq a$,
  \item $max\{ |\gamma_1 ^{'}(t)|: \ t\in [\frac{1}{2}-8\beta,\frac{1}{2}+8\beta] \}\leq a^{'}$ and
  \item $max\{ |\gamma_1 ^{''}(t)|: \ t\in [\frac{1}{2}-8\beta,\frac{1}{2}+8\beta] \}\leq a^{''}$,
\end{enumerate}
in such way that there exists a function $\gamma:(\frac{1}{2}-b,\frac{1}{2}+b )\to \R$ whit $\gamma|_{ (\frac{1}{2}-8\beta,\frac{1}{2}+8\beta) }=\gamma_1$ and the properties (1), (2) and (3) hold.

 Therefore, by Lemma \ref{l11}, there exist $F:\T^{2}\to \T^{2}$ such that $d_{C^{2}}(F,Id)<\varepsilon$, $F|_{B((\frac{1}{2},\frac{1}{2}), r))^{c}}=Id$ such that $F(x,\gamma (x))=(x,\frac{1}{2})$ for all $x\in [\frac{1}{2}-b,\frac{1}{2}+b]$.\\
Then define $h:\T^{2}\to \T^{2}$ such that

 $${\bf{h=F\circ f_{}}}$$

\begin{rk}\label{propiedades}
 $h$ satisfies the following properties

   \begin{enumerate}

  \item $S_h=S_{f}$, because $F$ is a diffeomorphisms.
\item $h( B((\frac{1}{16},\frac{1}{4}),r)  )\cap B((\frac{1}{16},\frac{1}{4}),r)=\emptyset$  (see definition $f$ and $r$).

   \item $\overline{Dh_p(\mathcal{C}_{a_{0}}^u(p))}\setminus \{(0,0)\}\subset \mathcal{C}_{{a_{0}}}^u(h(p))$,
        for all  $p\in \T^{2}$, because $h\in \mathcal{U}_f{_{_{\theta\delta}}}$.

   \item There exists $\rho >0$ such that $h(S_h)\cap  B((\frac{1}{2},\frac{1}{2}), \rho)) =\{(x,\frac{1}{2}): \ x\in (\frac{1}{2}-\rho,\frac{1}{2}+\rho )   \}$.
\item $h(x)=A(x)$ for all  $x\in    \T^{2}\setminus        B((\frac{1}{16},\frac{1}{4}), r))\cup A^{-1} (  B((\frac{1}{2},\frac{1}{2}), r)))$.

   \item $h (\{(x,\frac{1}{2}): \ x\in [\frac{1}{2}-\rho,\frac{1}{2}+\rho ]   \})\subset \{(x,0): \ x\in [0,1 ]   \}$.

\item  $h(  \{(x,0): \ x\in [0,1 ]   \})=A(\{(x,0): \ x\in [0,1 ]   \}) \subset \{(x,0): \ x\in [0,1 ]   \}$.

   \item $h(\frac{1}{16},\frac{1}{4})= (\frac{1}{2},\frac{1}{2})$.

\item Since $D{f_{}}$ is transverse to $\mathcal{R}_1$ and
 $d_{C^{2}}(F,Id)<\varepsilon$ (taking a smaller $\varepsilon$ if necessary)  then $D{h_{}}$ is transverse to $\mathcal{R}_1$.

\item $h$ is expanding in $\T^{2}\setminus B((\frac{1}{16},\frac{1}{4}),2r)$.

   \end{enumerate}
   \end{rk}

Now we need prove that $(\frac{1}{16},\frac{1}{4})$ is a critical point of fold type for $h$.

 If $(x,y)\in S_f$ then  $$Df_{(x,y)}=\left(\begin{array}{cc}
  8 & 0  \\
  -\psi^{'}(x)\varphi(y)  &  0 \\
\end{array}%
\right).$$
Since $Df_{(x,y)}(0,1)=(0,0)$,  $Ker(Df_x)$ is generated by the vector $(0,1)$ for all $(x,y)\in S_f$.

A straightforward computation shows that
$T_{(\frac{1}{16},\frac{1}{4})}S_f=(1,0)$, and (by proposition \ref{fold}) this implies that $(\frac{1}{16},\frac{1}{4})$
is a critical point of fold type for $f$.  As $S_h=S_{f}$, $d_{C^{2}}(F,Id)<\varepsilon$(taking a smaller $\varepsilon$ if necessary)   then $T_{(\frac{1}{16},\frac{1}{4})}S_{h}$ is transverse to $Ker (Dh)$,
 which in turn implies that $(\frac{1}{16},\frac{1}{4})$ is a critical point of fold type for  $h$.

%
%
%
%
%
%
%
%
%
%
%
%

The map $h$ is the one which is  $C^{2}$ robustly transitive but not $C^{1}$.

\section{\bf{Proof of the fact that $h$ is not $C^{1}$ robustly transitive.}}

Proving that $h$ is not $C^{1}$
robustly transitive is simple.
Due to properties (4),(6) and (7) of
Remark \ref{propiedades},  $h(S_h)$ and $\{(x,\frac{1}{2}): \ x\in [0,1 ]   \}$ coincides locally  and
$h (\{(x,\frac{1}{2}): \ x\in [\frac{1}{2}-\rho,\frac{1}{2}+\rho ]   \})\subset \{(x,0): \ x\in [0,1 ]   \}$.
The following lemma will allow us to find a neighborhood $V$ of the point $(\frac{1}{16},\frac{1}{4})$ and a $C^{1}$-perturbation, $g$, of $h$
in such a way that $g^{n}(V)\subset \{(x,0): \ x\in [0,1 ]   \}$ for $n\geq 2$, which means that it is impossible for $g$ to be transitive.

The key is given by the following lemma:

\begin{lema}\label{perturbado}
  Let $x_0\in S_f$ be a critical point of fold type. Then for every $\varepsilon >0$  and every neighborhood $W_{x_{0}}$ of $x_0$
  there exists  $g$ with $d_{C^{1}}(g,f)<\varepsilon $ and a neighborhood $V_{x_{0}}$ of $x_0$ such that
  $V_{x_{0}}\subset S_g$, $g(V_{x_{0}})=f(S_f\cap V_{x_{0}})$ and $g(x)=f(x)$ for every $x\in W_{x_{0}}^{c}$.
\end{lema}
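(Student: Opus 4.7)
The plan is to put $f$ into fold normal form and carry out the perturbation entirely in those coordinates. By the fold-type hypothesis at $x_0$ there exist local diffeomorphisms $\psi_1:\R^2\to U$ and $\psi_2:V\to\R^2$, with $U$ a neighborhood of $x_0$ (which I shrink so that $U\subset W_{x_0}$) and $V$ a neighborhood of $f(x_0)$, such that $F:=\psi_2\circ f\circ\psi_1$ equals $F(x,y)=(x,y^2)$. In these coordinates $S_f$ and $f(S_f)$ both correspond to the $x$-axis $\{y=0\}$.

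I would then build a model perturbation $\widetilde F$ that flattens a small disk onto $\{y=0\}$. Fix a smooth cut-off $\eta:\R^2\to[0,1]$ with $\eta\equiv 1$ on $B(0,1)$ and $\eta\equiv 0$ off $B(0,2)$, and for $\delta>0$ set $\eta_\delta(p):=\eta(p/\delta)$. Define
$$\widetilde F(x,y):=\bigl(x,\;(1-\eta_\delta(x,y))\,y^2\bigr).$$
Then $\widetilde F\equiv(x,0)$ on $B(0,\delta)$ and $\widetilde F\equiv F$ outside $B(0,2\delta)$. The key estimate is $d_{C^1}(\widetilde F,F)\to 0$ as $\delta\to 0$: on the support of $\eta_\delta$ one has $|y|\leq 2\delta$ and $|\nabla\eta_\delta|\leq C/\delta$, so $|\eta_\delta y^2|=O(\delta^2)$, while $\partial_x(\eta_\delta y^2)=y^2\partial_x\eta_\delta=O(\delta)$ and $\partial_y(\eta_\delta y^2)=y^2\partial_y\eta_\delta+2y\eta_\delta=O(\delta)$. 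Transporting back by $g:=\psi_2^{-1}\circ\widetilde F\circ\psi_1^{-1}$ on $U$ and $g:=f$ on $U^c$, the two expressions match smoothly on the annulus $\psi_1(B(0,2\delta)\setminus B(0,\delta))$ since $\widetilde F=F$ there. Because $\psi_1,\psi_2$ are fixed, $d_{C^1}(g,f)\leq C'\delta<\varepsilon$ for $\delta$ small, and one may additionally require $\psi_1(B(0,2\delta))\subset W_{x_0}$, so $g\equiv f$ on $W_{x_0}^{c}$.

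To finish, I set $V_{x_0}:=\psi_1(B(0,\delta))$. On $V_{x_0}$ the map $g$ reads in charts as $(x,y)\mapsto(x,0)$, whose Jacobian has rank one everywhere, so $V_{x_0}\subset S_g$. The sets $g(V_{x_0})$ and $f(S_f\cap V_{x_0})$ both correspond in the target chart to $\{(x,0):|x|<\delta\}$, hence they coincide. The delicate step is the $C^1$ estimate, and it succeeds precisely because the fold normal form makes the obstruction $y^2$ vanish to second order along $S_f$: flattening it on a disk of scale $\delta$ costs only $O(\delta)$ in first derivative norm, which is exactly what is needed to keep the perturbation inside a $C^1$ neighborhood of $f$.
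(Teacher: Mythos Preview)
Your argument is correct and follows the same route as the paper's: pass to the fold normal form $(x,y)\mapsto(x,y^{2})$ and kill the $y^{2}$ by a cutoff of scale $\delta$, the $C^{1}$ cost being $O(\delta)$ because $y^{2}$ vanishes to second order along $S_f$; the paper uses a one--variable bump $\varphi(y)$ in place of your radial $\eta_\delta(x,y)$, which is a cosmetic difference. One small slip to fix: $\widetilde F=F$ holds \emph{outside} $B(0,2\delta)$, not on the annulus $B(0,2\delta)\setminus B(0,\delta)$ as you wrote --- the gluing of the two definitions of $g$ works because $\psi_1(B(0,2\delta))$ is compactly contained in $U\subset W_{x_0}$ for $\delta$ small, so $g=f$ on a neighborhood of $\partial U$ and hence on $W_{x_0}^{c}$.
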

\begin{proof}
As $x_0$ is a critical point of type fold, without loss of generality we can assume that $x_0=(0,0)$ and $f(x,y)=(x,y^{2})$, for $(x,y)\in U$, with $U$
small enough and $U\subset W_{x_{0}}$.
Given $\varepsilon>0,$ we choose $\delta>0$ such that $4\delta<\varepsilon$, $\delta^2<\varepsilon$ and
$B((0,0),\delta )\subset U$.
We consider a bump function $\varphi:\mathbb{R}\to \mathbb{R}$  of class $C^1$ such as that in
Figure \ref{figura3} (a) with
$|\varphi'|\leq \frac{2}{\delta}$.
Consider $g$ a $C^1$-perturbation of $f$ defined by $g(x,y)=(x,\varphi(y)y^2)$, for $(x,y)\in U.$
Since it is not hard to prove that the distance between $g$ and $f$ is less than $\varepsilon$
in the $C^1$ topology, we leave the details to the reader.
Moreover, the critical set of $g$ contains the ball centered  at the
origin of radius $\frac{\delta}{2}$ and $g(B((0,0), \frac{\delta}{2}))=f(S_f\cap B((0,0), \frac{\delta}{2}) )$.
\end{proof}

\begin{figure}[ht]
\psfrag{d}{\tiny{$\delta$}}\psfrag{d2}{\tiny{$\delta/2$}}
\psfrag{-d2}{\tiny{$-\delta /2$}}
\psfrag{-d}{\tiny{$-\delta$}}
\psfrag{1}{\tiny{$1$}}
\psfrag{aa}{\tiny{$\theta$}}
\psfrag{q}{\tiny{$\psi$}}
\psfrag{phi}{\tiny{$\varphi$}}
\psfrag{-delta0}{{\tiny{$-\delta _0$}}}\psfrag{delta0}{{\tiny{$\delta _0$}}}
\psfrag{g}{$g$}
\begin{center}
\subfigure[]{\includegraphics[scale=0.15]{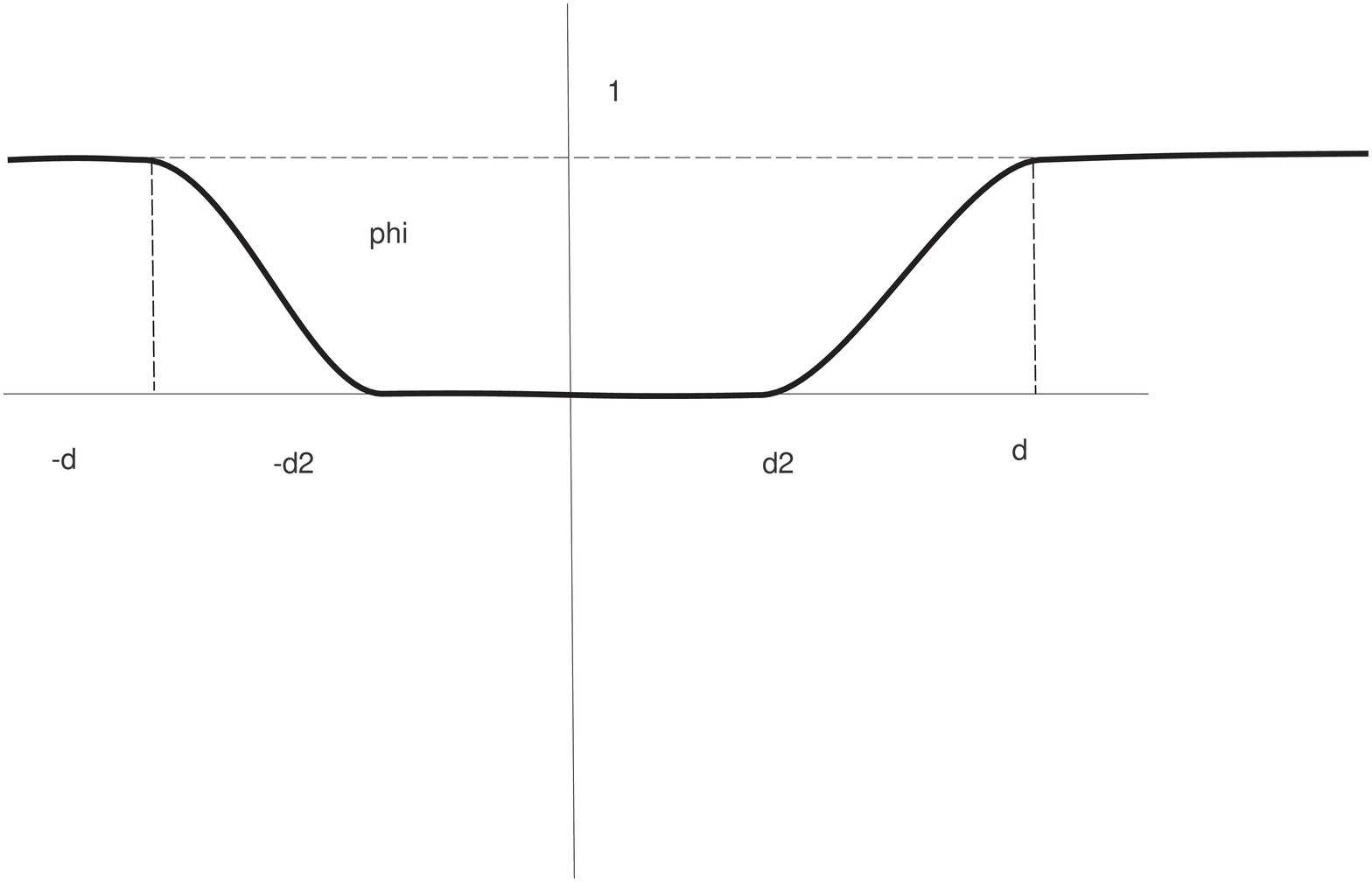}}
\subfigure[]{\includegraphics[scale=0.15]{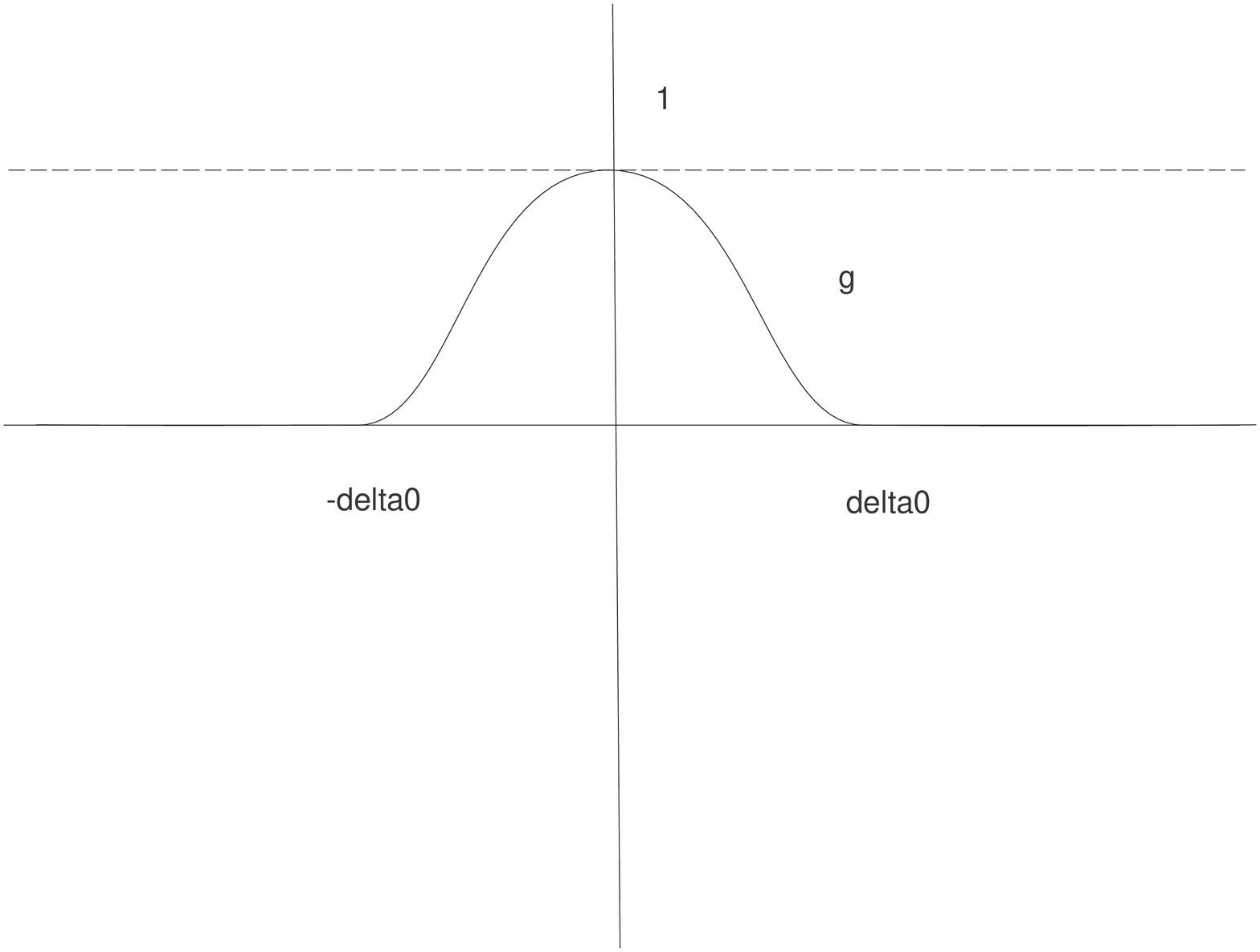}}
\caption{}\label{figura3}
\end{center}
\end{figure}

{\bf{Proof of the fact that h is not robustly transitive:}}\\
As we have previously shown  the point $(\frac{1}{16},\frac{1}{4})$ is a critical point of fold type
$h(\frac{1}{16},\frac{1}{4})=( \frac{1}{2}, \frac{1}{2})$ and there exists $\rho>0$ such that
$h(S_h)\cap  B((\frac{1}{2},\frac{1}{2}), \rho)) =\{(x,\frac{1}{2}): \ x\in (\frac{1}{2}-\rho,\frac{1}{2}+\rho )   \}$.
Then, due to Lemma \ref{perturbado} there exist a neighborhood $V$ of the point
$(\frac{1}{16},\frac{1}{4})$ and $g$, $C^{1}$
close to $h$, such that $g(V)=h(S_h\cap V)\subset \{(x,\frac{1}{2}): \ x\in [0,1 ]   \}$.
As $h (\{(x,\frac{1}{2}): \ x\in (\frac{1}{2}-\rho,\frac{1}{2}+\rho )   \})\subset \{(x,0): \ x\in [0,1 ]   \}$
and $h(  \{(x,0): \ x\in [0,1 ]   \})=A(\{(x,0): \ x\in [0,1 ]   \})$, then
$g^{n}(V)\subset \{(x,0): \ x\in [0,1 ]   \}$ for all $n\geq 2$, which implies that $g$ is not transitive.

\section{\bf{Proof of the fact that $h$ is $C^{2}$ robustly transitive.}}
We recall that the critical set of the function
$f_{}$ is $S_{f_{}}=\{(x,y): 2-\psi (x)\varphi^{'}(y)=0\}$.
We also recall that
$Df$ is transverse to $\mathcal{R}_1$, which implies that $S_{f_{}}$ is a one-dimensional submanifold.

For $y_0=\frac{1}{4}+\frac{\delta}{8}$ ($\varphi ^{'}(y_0)=1$) let $x_0,x_1$, $x_0\neq x_1$ be such that
$(x_0,y_0),(x_1,y_0)\in S_f$. Since $\varphi^{''}(y_0)=0$, we have that
$T_{(x_0,y_0)}S_{f_{}} =T_{(x_1,y_0)}S_{f_{}} =(0,1)$. Since $Ker Df_{}$ is generated by $(0,1)$, is not hard to see that
the points $(x_0,y_0)$ and $(x_1,y_0)$ are the only ones which are not of fold type.
Therefore $f_{}$ satisfies the following properties:
\begin{enumerate}
\item There exists $\rho >0$ such that if $B_0=B((x_0,y_0),\rho )$ and $B_1=B((x_1,y_0),\rho )$ with
a sufficiently small $\rho$, the following holds:
  \begin{itemize}
 \item $B_i\setminus S_{f_{}}$ has two connected components, for $i=0,1.$
\item $S_f\setminus B_0\cup B_1$ are critical points of fold type.
 \item If $\gamma $ is a curve with  $\gamma^{'}(t)\in  \mathcal{C}_{a_{0}}^u (\gamma (t))$ and such that
 $\gamma\cap S_{f_{}}\subset B_0\cup B_1$, then $\gamma$ is transverse to $S_{f}$.
\item If furthermore $diam (\gamma )<10r$ (taking a smaller $r$ if necessary) then $\gamma$ intersects
$B_0\cup B_1$ in at most two points.
\end{itemize}

 \item $f_{}|_{\T^{2}\setminus  B((\frac{1}{16},\frac{1}{4}),r)}$ is expanding, i.e. the eigenvalues
of $Df_{}(p)$ are greater than one for $p\in \T^{2}\setminus B((\frac{1}{16},\frac{1}{4}),r)$.
  \item $f_{}$ satisfies Lemma \ref{conosinestables}.

  \item $Df_{}$ is transverse to $\mathcal{R}_1$.
\end{enumerate}

Let $\mathcal{U}_{f_{}}$ be a $C^{2}$ neighborhood of $f_{}$
that satisfies the properties that have been stated above. Then, since
$h=F\circ f_{}$ and $F$ is as $C^{2}$-close to the identity as necessary, we can take
$h\in \mathcal{U}_{f_{}}$ and $\mathcal{U}_{h}$ a $C^{2}$ neighborhood of $h$ such that
$\mathcal{U}_{h}\subset \mathcal{U}_{f} $.

For the following lemmas we will take $g\in \mathcal{U}_{h}$.
Therefore, if we prove that $g$ is transitive this will mean that $h$ is $C^{2}$ robustly transitive.

We start by making the following remark.

\begin{rk}\label{rk1}

Let $x_0\in S_g$ be a critical point of fold type.
Without loss of generality, we can assume that $x_0=(0,0)$ and $g(x,y)=(x,y^{2})$
for all $(x,y)$ belonging to a small enough neighborhood $U$ of $x_0$.
If $\alpha\subset B(0,r)\subset U $  is a $C^{1}$ curve which is transverse to $S_g$ with $\alpha \cap S_g=\{0\}$,
then $g(\alpha)\setminus g(0)$ is contained in the interior of $g(B(0,r))$. Given $\varepsilon >0$, it is possible to construct a curve $\gamma$ which satisfies the following conditions (see figure \ref{figura4} (b)):
\begin{itemize}
\item $\gamma = g(\alpha )$ in  $B(0,\varepsilon )^{c}$.
\item $\gamma \subset int(g (B(0,r)))$.
\item $\gamma$ is as $C^{1}$-close to $g(\alpha)$ as necessary.

\end{itemize}

\end{rk}

\begin{figure}[ht]
\psfrag{alpha}{$\alpha$}\psfrag{galpha}{$g(\alpha )$}
\psfrag{b1}{$B((0,0),\varepsilon )$}
\psfrag{gamma}{$\gamma$}
\psfrag{z1}{$z_1$}
\psfrag{z2}{$z_2$}
\psfrag{bb}{$B((0,0),r)$}
\psfrag{sf}{$S_f$}
\psfrag{1}{\tiny{$1$}}
\psfrag{aa}{\tiny{$\theta$}}
\psfrag{q}{\tiny{$\psi$}}
\psfrag{phi}{\tiny{$\varphi$}}
\psfrag{-delta0}{{\tiny{$-\delta _0$}}}\psfrag{delta0}{{\tiny{$\delta _0$}}}
\psfrag{g}{$g$}
\begin{center}
\subfigure[]{\includegraphics[scale=0.2]{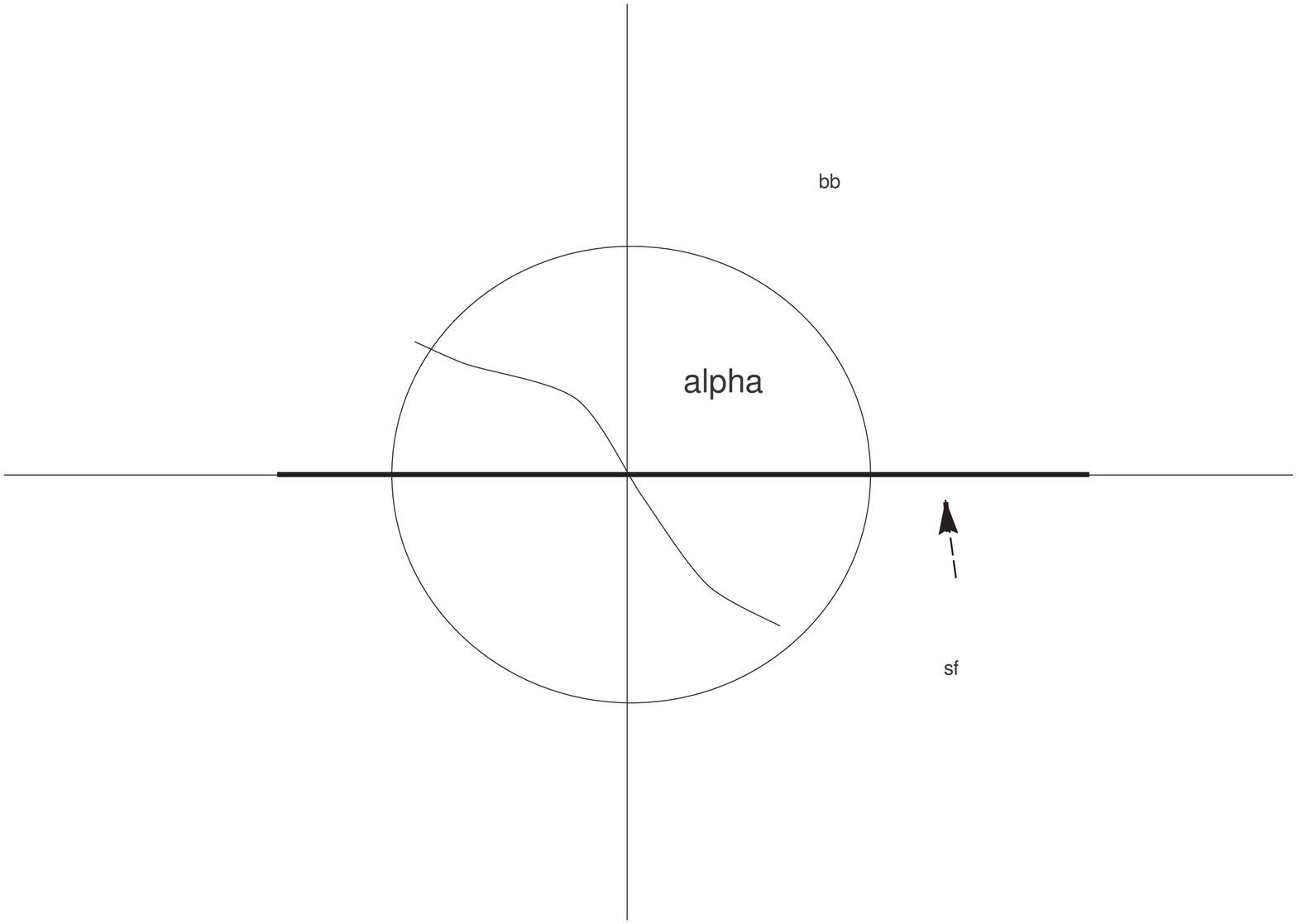}}
\subfigure[]{\includegraphics[scale=0.2]{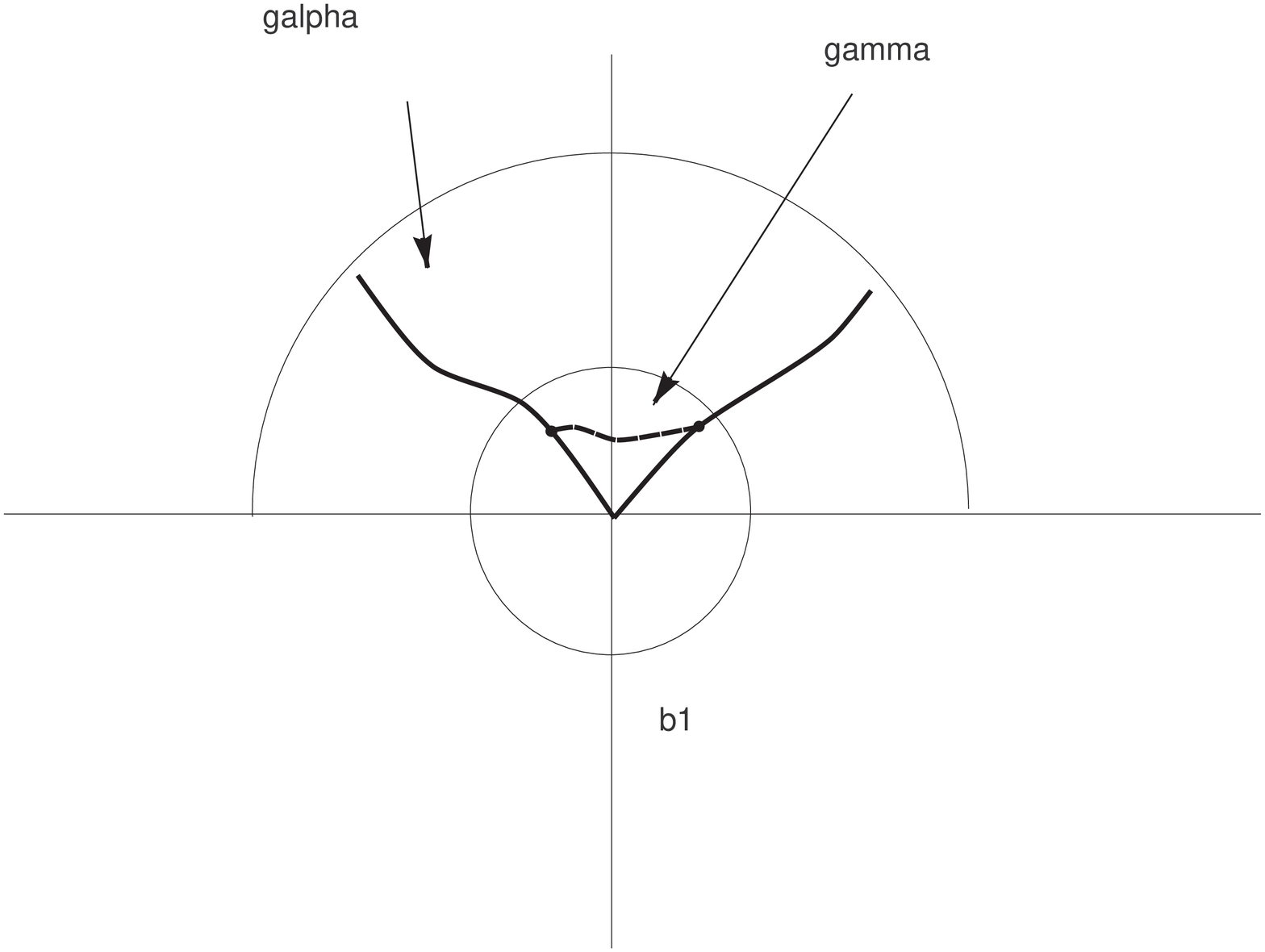}}
\caption{}\label{figura4}
\end{center}
\end{figure}

\begin{lema}\label{lemacurva}
Let $V\subset \T^{2}$ be an open set and $\gamma \subset V$ be a curve such that
$\gamma^{'}(t)\in  \mathcal{C}_{a_{0}}^u (\gamma (t))$,  $\gamma$ is transverse to $S_g$
and $\gamma \cap S_g$ is a single point of fold type. Then there exists a curve $\beta$  with
$\beta^{'}(t)\in  \mathcal{C}_{a_{0}}^u (\beta (t))$ contained in the interior of $g(V)$, transverse to
$S_g$ such that      $\mathrm{diam}(\beta) > 4\mathrm{diam}(\gamma)$.
\end{lema}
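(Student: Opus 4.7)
The natural first guess for $\beta$ is the image curve $\beta_0 = g(\gamma)$. By the robust unstable cone property (Lemma \ref{conosinestables}(i) combined with Lemma \ref{clly1}), the tangent $Dg_{\gamma(t)}\gamma'(t)$ lies in $\mathcal{C}^u_{a_0}(g(\gamma(t)))$; this vector is nonzero even at the critical parameter $t_0$ with $\gamma(t_0) = p$, because $\gamma'(t_0)$ lies in the nearly horizontal cone while $\mathrm{Ker}\,Dg_p$ is transverse to the cone (essentially vertical in the fold normal form). By Lemma \ref{conosinestables}(iii), $\mathrm{diam}(g(\gamma)) \geq 4\,\mathrm{diam}(\gamma)$, and a close look at the proof of Lemma \ref{conosinestables}(ii) shows that the factor is in fact strictly larger than $4$. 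Hence $\beta_0$ already has the right tangent and diameter; the two defects are that $\beta_0$ touches $\partial(g(V))$ at $g(p)$ and is tangent (not transverse) to $S_g$ there.

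To cure the first defect I invoke Remark \ref{rk1}: pick $r_0 > 0$ so that $B(p,r_0) \subset V$ and $g$ has the fold normal form on a neighborhood of $B(p,r_0)$, let $\alpha$ be the sub-arc of $\gamma$ contained in $B(p,r_0)$ (it is transverse to $S_g$ and meets $S_g$ only at $p$), and for a small $\varepsilon > 0$ to be chosen later apply the remark to produce a curve $\tilde\beta$ which coincides with $g(\gamma)$ outside $B(g(p),\varepsilon)$, is $C^1$-close to $g(\gamma)$, and lies in $\mathrm{int}(g(B(p,r_0))) \subset \mathrm{int}(g(V))$. At points of $\gamma$ away from $p$, $g$ is a local diffeomorphism, so $g(\gamma)$ is automatically in $\mathrm{int}(g(V))$ there; thus $\tilde\beta \subset \mathrm{int}(g(V))$ globally.

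The second defect, together with any other non-transverse intersections $\tilde\beta$ may have with $S_g$, is corrected by a further arbitrarily small $C^1$ perturbation: the transversality theorem (item (3) in Section \ref{critical}) applied to the map from the parameter interval into $\T^2$ yields a curve $\beta$ transverse to $S_g$. Since Lemma \ref{conosinestables}(i) sends the closed cone $\overline{\mathcal{C}^u_{a_0}(p)}\setminus\{0\}$ strictly inside the open cone $\mathcal{C}^u_{a_0}(g(p))$, a sufficiently small $C^1$ perturbation keeps the tangent of $\beta$ inside $\mathcal{C}^u_{a_0}$, and openness of $g(V)$ keeps $\beta \subset \mathrm{int}(g(V))$. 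Finally, because every modification occurs inside a ball of radius $O(\varepsilon)$ around $g(p)$, one has $\mathrm{diam}(\beta) \geq \mathrm{diam}(g(\gamma)) - O(\varepsilon)$, and choosing $\varepsilon$ small compared with the strict slack in the expansion factor yields $\mathrm{diam}(\beta) > 4\,\mathrm{diam}(\gamma)$.

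The step I expect to be the main obstacle is ensuring that the concrete perturbation delivered by Remark \ref{rk1} cohabits with the cone condition: the curve $\tilde\beta$ is constructed by hand in the fold normal form, so one has to verify either that that construction automatically produces a curve with tangent in the (narrow) unstable cone — plausible since it is $C^1$-close to $g(\alpha)$ whose tangent at $g(p)$ is a genuine interior cone vector — or else to follow it by an additional cone-preserving smoothing supported in $B(g(p),\varepsilon)$. Everything else (the final transversality perturbation, the diameter estimate, and the global inclusion in $\mathrm{int}(g(V))$) is routine once this local normal-form step is handled.
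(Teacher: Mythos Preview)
Your approach is exactly the paper's: take $g(\gamma)$, observe it lies in $\mathrm{int}(g(V))$ except possibly at the single image point $g(p)$, note the diameter grows by a factor strictly greater than $4$ from the cone lemma, and then invoke Remark~\ref{rk1} to push the curve off the boundary near $g(p)$. The paper's proof is two sentences and leaves the cone-preservation and transversality-to-$S_g$ verifications implicit; your added generic-perturbation step and your discussion of why the $C^1$-closeness in Remark~\ref{rk1} keeps the tangent inside the (open) cone are the right way to fill those in.

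One small misstatement: your ``second defect'' claims $\beta_0$ is \emph{tangent to $S_g$} at $g(p)$, but there is no reason $g(p)\in S_g$ at all --- in fact property~(2) of Remark~\ref{propiedades} forces $g(S_g)\cap S_g=\emptyset$ for $g$ close to $h$. What you are seeing in the fold chart is that $g(\gamma)$ is tangent to $g(S_g)$ (the fold image $\{y=0\}$), not to $S_g$. This does not harm your argument, since your final density-of-transversality perturbation already takes care of whatever intersections $\tilde\beta$ may have with $S_g$ elsewhere.
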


\begin{proof}
Since $\gamma$ intersects $S_g$ in a single point $x_0$ fold type,  $g(\gamma)$
is contained in the interior of $g(V)$ except for the point $g(x_0)$ which can be in the boundary of $g(V)$
and furthermore $\mathrm{diam}(g(\gamma)) > 4\mathrm{diam}(\gamma)$ because $\gamma^{'}(t)\in  \mathcal{C}_{a_{0}}^u (\gamma (t))$. Then, due to Remark \ref{rk1}
the curve $g(\gamma)$
can be approximated in a neighborhood of the point $g(x_0)$
by a curve $\beta$ that satisfies the thesis of the lemma.
  \end{proof}

Note that if $\gamma$ is transverse to $S_g$ then $\gamma \cap S_g$
consists of finitely many points. If additionally the intersection points of $\gamma$
with $S_g$ are critical points of fold type, then Lemma \ref{lemacurva} holds.

\begin{lema}\label{lema55}
  Let $V$ be open, $\alpha\subset V$ with  $\alpha'(t)\subset \mathcal{C}^u_{a_0}(\alpha(t))$,
  $\alpha$ transverse to $S_g$ and $diam (\alpha )<10r$. There exists $\beta \subset int(g(V))$,
  with $\beta'(t)\subset \mathcal{C}^u_{a_0}(\beta(t))$, $\beta$ transverse to $S_g$ such that
  $diam (\beta )>\frac{6}{5}diam (\alpha )$.
\end{lema}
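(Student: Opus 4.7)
The plan is to reduce to (an iterated version of) Lemma \ref{lemacurva} by excising the at most two non-fold-type intersections of $\alpha$ with $S_g$, applying the fold-type construction to the longest remaining sub-arc, and using the cone-expansion to absorb the loss from cutting. The key numerical observation is that $4/3>6/5$: the factor $4$ is the expansion on cone-tangent curves provided by Lemma \ref{conosinestables}(iii), while the factor $3$ reflects the worst-case division of $\alpha$ into three pieces when both non-fold-type critical intersections have to be excised.

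First I would analyze $\alpha\cap S_g$. By transversality and compactness this set is finite. Since $g\in\mathcal{U}_h\subset\mathcal{U}_{f}$, the properties listed at the beginning of this section persist for $g$: every point of $S_g\setminus(B_0\cup B_1)$ is a fold critical point, and the hypothesis $\mathrm{diam}(\alpha)<10r$ guarantees that $\alpha$ meets $B_0\cup B_1$ in at most two points. I would remove small open parameter neighborhoods of these (at most two) non-fold-type intersections, leaving at most three cone-tangent sub-arcs $\alpha_1,\alpha_2,\alpha_3$, each meeting $S_g$ only at fold critical points. Let $\alpha_0$ be the one of largest diameter. Because the tangent of $\alpha$ lies in $\mathcal{C}^u_{a_0}$, length and diameter along any cone-tangent arc differ by a factor of at most $\sqrt{1+a_0^2}$, so a short estimate gives
\[
\mathrm{diam}(\alpha_0)\;\geq\;\frac{\mathrm{diam}(\alpha)}{3\sqrt{1+a_0^2}}-\eta,
\]
where $\eta>0$ can be made arbitrarily small by shrinking the excised neighborhoods.

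Next I would apply Lemma \ref{conosinestables}(iii) to obtain $\mathrm{diam}(g(\alpha_0))\geq 4\,\mathrm{diam}(\alpha_0)$. The image $g(\alpha_0)$ lies in $\overline{g(V)}$ and can touch $\partial g(V)$ only at the finitely many images of the fold critical points of $\alpha_0\cap S_g$; choosing pairwise-disjoint small balls around each such image and applying the local approximation of Remark \ref{rk1} independently in each produces a single curve $\beta\subset\mathrm{int}(g(V))$, tangent to $\mathcal{C}^u_{a_0}$ (the cone condition being $C^1$-open) and transverse to $S_g$, with $\mathrm{diam}(\beta)$ as close to $\mathrm{diam}(g(\alpha_0))$ as one wishes. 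Combining the two estimates,
\[
\mathrm{diam}(\beta)\;\geq\;\frac{4(1-\varepsilon)}{3\sqrt{1+a_0^2}}\,\mathrm{diam}(\alpha),
\]
which exceeds $\tfrac{6}{5}\mathrm{diam}(\alpha)$ as soon as $a_0$ (fixed back in Lemma \ref{conosinestables}) and the approximation error $\varepsilon$ are small enough.

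The main obstacle is the metric estimate on $\alpha_0$: the naive statement \emph{``the longest of three sub-arcs has at least one third of the total''} is about length, not diameter, and must be translated via cone-tangency. This forces $a_0$ to be chosen small enough that the length--diameter ratio on cone-tangent curves sits within the margin $\tfrac{4}{3}-\tfrac{6}{5}=\tfrac{2}{15}$. Once that is arranged, the iterated application of Remark \ref{rk1} near each fold image is essentially bookkeeping: each perturbation is arbitrarily small in $C^1$ and localised in a disjoint ball, so transversality to $S_g$, cone-tangency, and containment in $\mathrm{int}(g(V))$ are preserved simultaneously.
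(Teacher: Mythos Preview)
Your argument is correct and follows the same strategy as the paper: excise the at most two potentially non-fold intersections of $\alpha$ with $S_g$ inside $B_0\cup B_1$, keep the longest of the (at most three) remaining sub-arcs, and apply the fold-type version of Lemma~\ref{lemacurva} together with the factor-$4$ cone expansion. The paper is slightly terser---it removes just the two points $x_1,x_2$ rather than open neighborhoods, and it directly asserts that some component $\alpha_1$ of $\alpha\setminus\{x_1,x_2\}$ satisfies $\mathrm{diam}(\alpha_1)\geq\tfrac{3}{10}\,\mathrm{diam}(\alpha)$, so that $4\cdot\tfrac{3}{10}=\tfrac{6}{5}$ exactly---whereas you are more explicit about the length/diameter conversion via the factor $\sqrt{1+a_0^2}$ and the margin $\tfrac{4}{3}-\tfrac{6}{5}$; this is the same content, just with the bookkeeping made visible.
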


As $diam (\alpha )<10r$ then $\alpha\cap S_g$ has at most two points $x_1,x_2$ in $B_0\cup B_1$.
Let $\alpha_1$ be a connected component of $\alpha \setminus \{x_1,x_2\}$ such that
$diam (\alpha_1)\geq\frac{3}{10} diam (\alpha)$.
Note that $\alpha_1$ can only intersect $S_g$ in a finite number of
points of fold type. Then by Lemma \ref{lemacurva},
we can construct a curve $\beta$ with $\beta \subset g(V)$,
$\beta'(t)\subset \mathcal{C}^u_{a_0}(\beta(t))$ and
$diam (\beta )>4 diam(\alpha_1 )=\frac{6}{5}diam (\alpha )$.

\begin{lema}\label{lema66}
  Let $V\subset \T^{2}$ be an open set. Then
there exist $y\in V$ and $n_0\in\N$  such that $g^{n}(y)\in int(g^{n}(V))$ and
$g^{n}(y)\in \T^{2}\setminus B((\frac{1}{16},\frac{1}{4}),2r)$ for all $n\geq n_0$.
\end{lema}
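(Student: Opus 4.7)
The plan is to combine the curve-growth machinery of Lemma \ref{lema55} with the existence of a fixed point of $g$ in the expanding region, in order to locate a preimage of that fixed point inside $V$ whose forward orbit then stays in $\T^2\setminus B((\tfrac{1}{16},\tfrac{1}{4}),2r)$ automatically.

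Concretely, I would first produce, by openness of $V$ and since $Dg$ is transverse to $\mathcal{R}_1$ (so $S_g$ is a smooth $1$-submanifold by Remark \ref{propiedades}(9) plus the proof of Theorem \ref{sf}), a small $C^1$ curve $\alpha_0\subset V$ with tangent in $\mathcal{C}^u_{a_0}$ and transverse to $S_g$. Applying Lemma \ref{lema55} inductively yields curves $\alpha_n\subset int(g^n(V))$ with $\alpha_n'(t)\in\mathcal{C}^u_{a_0}(\alpha_n(t))$, transverse to $S_g$, and $\mathrm{diam}(\alpha_{n+1})>\tfrac{6}{5}\mathrm{diam}(\alpha_n)$; whenever the diameter is about to exceed $10r$, we cut $\alpha_n$ into shorter pieces and continue with one of them. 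After finitely many steps $n_1$ we can force $\mathrm{diam}(\alpha_{n_1})$ to be as large as we wish.

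Next, put $E:=\T^2\setminus B((\tfrac{1}{16},\tfrac{1}{4}),2r)$. Because $S_f$ is contained in the support of $\psi\cdot\varphi'$, we have $S_g\subset B((\tfrac{1}{16},\tfrac{1}{4}),r)$ (using $C^2$ closeness of $g$ to $h$ and of $h$ to $f$); thus $g$ is a local $C^2$ diffeomorphism on $E$ and, by property (10) of Remark \ref{propiedades}, expanding there. Once $\mathrm{diam}(\alpha_{n_1})$ is large enough, $\alpha_{n_1}$ contains a sub-arc $\gamma\subset int(g^{n_1}(V))\cap E$ of definite length, still with tangent in $\mathcal{C}^u_{a_0}$ and now disjoint from $S_g$. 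Since $\psi(0)=0$, $\varphi(0)=0$ and $F$ is the identity outside $B((\tfrac{1}{2},\tfrac{1}{2}),r)$, the origin is a fixed point of $h$ in $E$; persistence then gives a fixed point $p_g\in E$ for $g$, with $p_g\notin S_g$.

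Finally, I would iterate $g$ on $\gamma$, extracting at each step a sub-arc of $g(\gamma_k)$ which remains in $E$; because the images are expanded by a factor $\geq 4$ and the ball $B((\tfrac{1}{16},\tfrac{1}{4}),2r)$ has bounded diameter, a definite portion of each image survives in $E$, giving $\gamma_k\subset int(g^{n_1+k}(V))\cap E$ with $\gamma_k$ eventually sweeping across a prescribed neighborhood of $p_g$. A neighborhood of $p_g$ is thus contained in $int(g^{n_2}(V))$ for some $n_2$; pick $y\in V$ with $g^{n_2}(y)=p_g$. Since $p_g$ is fixed and $g$ is a local diffeomorphism at $p_g$, we get $g^n(y)=p_g\in E\cap int(g^n(V))$ for all $n\geq n_2$, which is exactly the conclusion. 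The main difficulty I expect is the last step: propagating unstable sub-arcs inside $E$ under forward iteration and showing that they densely spread and cover a neighborhood of $p_g$. This is delicate because $E$ is not forward $g$-invariant, so one cannot simply invoke mixing of an expanding map; the argument must rely on the uniform expansion rate $\geq 4$ on unstable vectors together with a controlled estimate on how much of each iterate re-enters $B((\tfrac{1}{16},\tfrac{1}{4}),2r)$.
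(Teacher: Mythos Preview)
Your first two stages coincide with the paper's proof: you grow curves via Lemma~\ref{lema55} until one of them, say $\beta_{n_1}\subset int(g^{n_1}(V))$, has diameter $\geq 8r$, then you pick a sub-arc $\gamma\subset\beta_{n_1}\cap E$ of diameter $\geq 2r$ and keep extracting sub-arcs $\gamma_k\subset g(\gamma_{k-1})\cap E$ of diameter $\geq 2r$. Up to here the arguments are identical.

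The divergence, and the gap, is in what you do with the $\gamma_k$. You want them to ``sweep across a prescribed neighborhood of $p_g$'' so that $p_g\in int(g^{n_2}(V))$. This does not follow: the $\gamma_k$ are one-dimensional arcs whose tangents lie in the narrow cone $\mathcal{C}^u_{a_0}$, so each $\gamma_k$ is essentially a graph over the $x$-axis. Even when its diameter is large it lies in a thin horizontal strip and there is no mechanism forcing that strip to contain the specific point $p_g$; different choices of sub-arc at each step can keep the strips away from $p_g$ forever. Your own caveat (``$E$ is not forward $g$-invariant, so one cannot simply invoke mixing of an expanding map'') is exactly the obstruction, and it is not overcome by the cone expansion estimate alone.

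The paper avoids this entirely with a nested-intersection argument and never uses a fixed point. Having built $\gamma=\gamma_0,\gamma_1,\gamma_2,\dots$ as above with $g(\gamma_k)\supset\gamma_{k+1}$, it pulls back: let $\eta_k\subset\gamma_0$ be the sub-arc with $g^k(\eta_k)=\gamma_k$. Then $\eta_{k+1}\subset\eta_k$, so $\bigcap_k\overline{\eta_k}\neq\emptyset$; any $x$ in this intersection satisfies $g^k(x)\in E$ for every $k\geq 0$. Since $x\in\gamma_0\subset int(g^{n_1}(V))$ and every $g^k(x)$ lies in $E$ (hence away from $S_g$), $g$ is a local diffeomorphism along the whole forward orbit of $x$, which gives $g^k(x)\in int(g^{n_1+k}(V))$ for all $k$. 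Choosing $y\in V$ with $g^{n_1}(y)=x$ finishes the proof. The fixed point $p_g$ is simply unnecessary: what you need is \emph{some} point whose forward orbit stays in $E$, and the Cantor-type intersection produces one directly.
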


\begin{proof}

\noindent{\bf{Claim :}} If $\alpha$ is a curve such that
$\alpha'(t)\subset \mathcal{C}^u_{a_0}(\alpha(t))$,
$\alpha\subset \T^{2}\setminus B((\frac{1}{16},\frac{1}{4}),2r)$ and
  $diam(\alpha)\geq 2r$,  then there exists $x\in \alpha$ such that
  $g^{n}(x)\in\T^{2}\setminus B((\frac{1}{16},\frac{1}{4}),2r)$ for all $n\geq 0$.

  Proof of claim: we begin by constructing a sequence of curves $\{\alpha_n\}$ such that:
\begin{enumerate}
\item  $g(\alpha_n)\supset \alpha_{n+1}$,$\forall n\in \N$,
\item $\alpha_n\subset \T^{2}\setminus B((\frac{1}{16},\frac{1}{4}),2r)$, $\forall n\in \N$ and
  \item $\alpha'_n(t)\subset \mathcal{C}^u_{a_0}(\alpha _n(t))$.
\end{enumerate}

  Let $\alpha_0=\alpha$. As  $\alpha'(t)\subset \mathcal{C}^u_{a_0}(\alpha(t))$ and  $diam(\alpha)\geq 2r$
  then $diam (g(\alpha ))\geq 8r$. As  $diam (B((\frac{1}{16},\frac{1}{4}),2r))=4r$, there exists
$\alpha_1\subset g (\alpha )$ with $diam (\alpha_1)\geq 2r$ and
$\alpha_1\subset \T^{2}\setminus B((\frac{1}{16},\frac{1}{4}),2r)$.
Proceeding inductively we have the sequence  $\{\alpha_n\}$.\\
For every $\alpha_n,$ let $\gamma_n\subset\alpha_0=\alpha$ be such that
$g^{n}(\gamma_n)=\alpha_n$. Since
$g(\alpha_n)\supset\alpha_{n+1}$ then $\gamma_{n+1}\subset
\gamma_n$. Consider $B=\cap\gamma_n$ (note that $B\neq\emptyset$). If $y\in B,$ then
 $g^{n}(y)\in \T^{2}\setminus B((\frac{1}{16},\frac{1}{4}),2r)$ for all $n\geq 0$, and this proves the Claim.

Given $V$, let $\alpha \subset V$ be such that $\alpha'(t)\subset \mathcal{C}^u_{a_0}(\alpha(t))$ and
$\alpha$ transverse to $S_g$. Then by Lemma \ref{lema55} there exists $\beta_1$ such that:
\begin{itemize}
  \item $\beta_1\subset int(g(V))$,
  \item $diam (\beta_1)\geq   \frac{6}{5}diam (\alpha )$,
 \item $\beta_1$ transverse to $S_g$ and
\item $\beta_1'(t)\subset \mathcal{C}^u_{a_0}(\beta_1(t))$.
\end{itemize}
Proceeding inductively there exist $n_0\in \N$ and $\beta_{n_{0}}$
with the above properties and such that $diam (\beta_{n_{0}})\geq 8r$. Therefore there exist $\widetilde{\beta_{n_{0}}}\subset \beta_{n_{0}}$ such that $diam (\widetilde{\beta_{n_{0}}})\geq 2r$ and $\widetilde{\beta_{n_{0}}} \subset \T^{2}\setminus B((\frac{1}{16},\frac{1}{4}),2r)$.
Then by de claim there exists $x\in \widetilde{\beta_{n_{0}}}$ such that
$g^{n}(x)\in \T^{2}\setminus B((\frac{1}{16},\frac{1}{4}),2r)$.
As $g|_{\T^{2}\setminus B((\frac{1}{16},\frac{1}{4}),2r)}$ is a local diffeomorphism,
we have that $g^{n}(x)\in int( g^{n+n_{0}}(V))$. Taking $y\in V$ such that $g^{n_{0}}(y)=x$, we are done.

\end{proof}

\begin{lema}\label{lema3}
 Given $V$ an open set of $\T^{2}$. There exist $y\in V$ and $n_{_{V}}\in\N$
 such that if $n\geq n_{_{V}}$ then $g^{n}(V) \supset B(g^{n}(y),r)$.
\end{lema}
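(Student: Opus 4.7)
The plan is to combine Lemma \ref{lema66} with the expanding property of $g$ outside the critical region to inflate a small ball about the iterated orbit of some $y \in V$ until it contains a ball of radius $r$. I would begin by applying Lemma \ref{lema66} to obtain $y\in V$ and $n_0\in\N$ with $g^n(y)\in\mathrm{int}(g^n(V))$ and $g^n(y)\in \T^2\setminus B\bigl((\tfrac{1}{16},\tfrac{1}{4}),2r\bigr)$ for every $n\ge n_0$. By property $(10)$ of Remark \ref{propiedades} together with the $C^2$-closeness of $g$ to $h$, there exist $\lambda>1$ and an open set $W$ strictly containing $\T^2\setminus B\bigl((\tfrac{1}{16},\tfrac{1}{4}),2r\bigr)$ on which $g$ is a local diffeomorphism and $|Dg_p(v)|\ge \lambda|v|$ for every $p\in W$ and every tangent vector $v$.

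Next, pick $s_0>0$ small enough that $B(g^{n_0}(y),s_0)\subset g^{n_0}(V)$, that $g$ is injective on every $B(p,s_0)$ with $p\in W$, and that $B(g^{n_0+k}(y),s_0)\subset W$ for all $k\ge 0$; this last condition is possible because $\T^2\setminus B\bigl((\tfrac{1}{16},\tfrac{1}{4}),2r\bigr)$ is compact and sits at positive distance from $\T^2\setminus W$. By induction on $k$ I would establish
\[
B\bigl(g^{n_0+k}(y),\,s_k\bigr)\subset g^{n_0+k}(V), \qquad s_k:=\min(\lambda^k s_0,\,r),
\]
using the standard fact that a diffeomorphism satisfying $|Dg\cdot v|\ge\lambda|v|$ on $B(x,s)$ maps it onto a set containing $B(g(x),\lambda s)$. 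Choosing the smallest $k$ with $\lambda^k s_0\ge r$ yields $n_V:=n_0+k$, and the inclusion persists for all larger $n$ because each further application of $g$ continues to expand a ball of radius $r$ about the orbit point.

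The main obstacle I anticipate is guaranteeing that each intermediate ball $B(g^{n_0+k}(y),s_k)$ stays inside the expanding neighborhood $W$ so that the inductive step genuinely applies. Lemma \ref{lema66} only places the orbit in the closed complement of the open ball $B\bigl((\tfrac{1}{16},\tfrac{1}{4}),2r\bigr)$, so the orbit could in principle hug its boundary and leave too little slack for a ball of radius $r$. I would handle this either by choosing $W$ with enough slack, using strict expansion on the closed set together with the smallness of $r$, or, in the regime where the ball eventually exceeds $W$, by passing to the unstable-cone expansion of Lemma \ref{conosinestables}: replacing the pointwise argument by tracking a curve through $g^{n_0+k}(y)$ with tangent in $\mathcal{C}^u_{a_0}$, whose factor-$4$ diameter growth per iterate suffices to push the inclusion past radius $r$ in finitely many further steps.
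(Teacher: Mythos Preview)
Your approach is essentially the paper's: apply Lemma \ref{lema66}, then use the expanding behaviour of $g$ away from the critical region to grow a ball about $g^{n}(y)$ to radius $r$. The paper's proof is only a few lines and proceeds exactly as your first two paragraphs.

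The obstacle you anticipate, however, dissolves once you cite the right expanding region. You invoke property (10) of Remark \ref{propiedades}, which gives expansion only on $\T^{2}\setminus B\bigl((\tfrac{1}{16},\tfrac{1}{4}),2r\bigr)$, and then worry about finding slack for a neighbourhood $W$. The paper instead uses property (2) listed at the start of Section~5: every $g\in\mathcal{U}_h\subset\mathcal{U}_f$ is expanding on the larger set $\T^{2}\setminus B\bigl((\tfrac{1}{16},\tfrac{1}{4}),r\bigr)$. Since $\mathrm{dist}\bigl(\partial B((\tfrac{1}{16},\tfrac{1}{4}),2r),\,\partial B((\tfrac{1}{16},\tfrac{1}{4}),r)\bigr)=r$ and Lemma \ref{lema66} keeps $g^{n}(y)$ outside the $2r$-ball, every ball $B(g^{n}(y),s)$ with $s\le r$ lies entirely inside the expanding region. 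Your inductive step then goes through with $W=\T^{2}\setminus B\bigl((\tfrac{1}{16},\tfrac{1}{4}),r\bigr)$ and no fallback to unstable-cone curves is needed.
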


\begin{proof}
 Given $V$, by Lemma \ref{lema66} there exist $y\in V$ and $n_0\in \N$ such that
 $g^{n}(y)\in int(g^{n}(V))\cap  (\T^{2}\setminus B((\frac{1}{16},\frac{1}{4}),2r))$
 for all $n\geq n_0$. As $g|_{ \T^{2}\setminus B((\frac{1}{16},\frac{1}{4}),r)}$ is expanding and
 $dist(\partial B((\frac{1}{16},\frac{1}{4}),2r), \partial B((\frac{1}{16},\frac{1}{4}),r))=r$
 then there exists $n_{_{V}}$ such that $g^{n}(V) \supset B(g^{n}(y),r)$ for $n\geq n_V$.
\end{proof}

The proof of the following lemma can be found in \cite[Lemma 2.2.5]{ilp}.

\begin{lema}\label{lema77}
 Given $\varepsilon >0,$ there exist open sets $B_1,...,B_n$ and $m\in\N$ such that:
 \begin{enumerate}
  \item[(i)] $\cup_{i=1}^{n}B_i=\T^{2}$, $diam (B_i)<\varepsilon $ and $A^{m}(B_i)=\T^{2}$
  for every $i=1\ldots,n$;
  \item[(ii)] There exists a $C^{0}$-neighborhood $\mathcal{U}_A$ of $A$ such that
	      $g^{m}(B_i)=\T^{2}$  for all $g\in  \mathcal{U}_A,$ for every $i=1\ldots,n$.
 \end{enumerate}

\end{lema}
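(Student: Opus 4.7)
The plan is to fix a small covering of $\T^2$ by open balls whose diameters sit below $\varepsilon$, and to iterate $A$ enough times so that the expansion of the linear map $A=\mathrm{diag}(8,2)$ drags each such ball across a full fundamental domain, with enough slack that the same surjectivity survives $C^0$-perturbations.

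For part (i), I would choose a finite $\varepsilon/4$-net $p_1,\ldots,p_n$ of $\T^2$ and let $B_i$ be the open metric ball of radius $\varepsilon/4$ around $p_i$, so that $\mathrm{diam}(B_i)\le \varepsilon/2<\varepsilon$ and $\bigcup_i B_i=\T^2$. Lift $B_i$ to a Euclidean disk $\widetilde{B_i}\subset \R^2$ of radius $\varepsilon/4$; since $\widetilde{A}:\R^2\to\R^2$ is multiplication by the matrix $A$, the image $\widetilde{A}^m(\widetilde{B_i})$ is an ellipse with semi-axes $8^m\varepsilon/4$ and $2^m\varepsilon/4$. Pick $m\in\N$ large enough that $2^m\varepsilon/4\ge \sqrt{2}/2$, the circumradius of the unit-square fundamental domain; then $\widetilde{A}^m(\widetilde{B_i})$ contains a closed disk of radius $\sqrt{2}/2$, hence a translate of the closed fundamental domain, and projecting yields $A^m(B_i)=\T^2$.

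For part (ii), the idea is to invoke the Brouwer degree on the universal cover. Choose a compact sub-disk $\widetilde{K_i}\subset\widetilde{B_i}$ and a closed fundamental domain $D\subset\R^2$ with $D\subset \widetilde{A}^m(\mathrm{int}(\widetilde{K_i}))$ (enlarging $m$ slightly if needed). Since $\widetilde{A}^m$ is a linear homeomorphism, $\widetilde{A}^m(\partial\widetilde{K_i})=\partial(\widetilde{A}^m(\widetilde{K_i}))$, so $\eta_i:=\mathrm{dist}(D,\widetilde{A}^m(\partial\widetilde{K_i}))>0$. Given $g$ $C^0$-close to $A$, $g$ is homotopic to $A$, and I would choose the lift $\widetilde{g}$ whose $\Z^2$-periodic difference $\widetilde{g}-\widetilde{A}$ has small sup-norm on $\R^2$; a telescoping estimate yields
\[
\|\widetilde{g}^m-\widetilde{A}^m\|_{C^0(\widetilde{K_i})}\le m\,\|A\|^{m-1}\,\|\widetilde{g}-\widetilde{A}\|_{C^0}.
\]
For $\|\widetilde{g}-\widetilde{A}\|_{C^0}$ small enough (depending on $m$ and $\eta_i$), the straight-line homotopy from $\widetilde{A}^m$ to $\widetilde{g}^m$ never hits any $\widetilde{p}\in D$ on $\partial\widetilde{K_i}$, so homotopy invariance gives $\deg(\widetilde{g}^m,\mathrm{int}(\widetilde{K_i}),\widetilde{p})=\deg(\widetilde{A}^m,\mathrm{int}(\widetilde{K_i}),\widetilde{p})=+1$; hence $D\subset\widetilde{g}^m(\widetilde{K_i})$, and projecting, $g^m(B_i)=\T^2$. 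Intersecting the neighborhoods obtained for each $i$ produces the desired $\mathcal{U}_A$.

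The main obstacle is that $g$ is only assumed to be $C^0$-close to $A$: it may fail to be a local homeomorphism, so one cannot simply continue preimages of individual points. The degree-theoretic argument on the universal cover sidesteps this by relying only on the linear homeomorphism $\widetilde{A}^m$ and the global homotopy class of $g$. The delicate point is the order of quantifiers: $m$ must be fixed first (as a function of $\varepsilon$ alone via the covering and expansion estimate), and only afterwards can $\mathcal{U}_A$ be shrunk to absorb the $m\,\|A\|^{m-1}$ amplification factor in the telescoping bound.
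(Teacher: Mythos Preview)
The paper does not supply its own proof of this lemma; it merely refers the reader to \cite[Lemma~2.2.5]{ilp}. Your argument is correct and self-contained. Part~(i) follows directly from the uniform expansion of the diagonal matrix $A$, as you say. For part~(ii), the Brouwer-degree argument on the universal cover is exactly the right device, since $g$ is only assumed $C^0$-close to $A$ and hence need not be a local homeomorphism; homotopy invariance of degree relative to $\partial\widetilde{K_i}$ replaces any appeal to local invertibility. One small remark on the telescoping estimate: the decomposition that works without any Lipschitz control on $\widetilde{g}$ is
\[
\widetilde{g}^{\,m}-\widetilde{A}^{\,m}=\sum_{k=0}^{m-1}\widetilde{A}^{\,k}\circ(\widetilde{g}-\widetilde{A})\circ\widetilde{g}^{\,m-1-k},
\]
with the linear map $\widetilde{A}^{\,k}$ on the outside and the $\Z^2$-periodic (hence uniformly bounded) function $\widetilde{g}-\widetilde{A}$ in the middle; this yields $\|\widetilde{g}^{\,m}-\widetilde{A}^{\,m}\|_{C^0}\le\sum_{k=0}^{m-1}\|A\|^{k}\,\|\widetilde{g}-\widetilde{A}\|_{C^0}\le m\,\|A\|^{m-1}\,\|\widetilde{g}-\widetilde{A}\|_{C^0}$, exactly the bound you state. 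Your observation about the order of quantifiers---fix $m$ from $\varepsilon$, then shrink $\mathcal{U}_A$---is the correct one.
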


\begin{proof}[{\textbf{Proof of the fact that $g$ is transitive.}}]
To prove that $g$ is transitive,
it is enough to prove that given $V\subset \T^{2}$ there exists $m$ such that $g^{m}(V)=\T^{2}$.\\

For $\varepsilon =r/4$,  let $\mathcal{U}_A$ be a $C^{0}$-neighborhood of $A$ such that Lemma
\ref{lema77} holds. We can assume that $\mathcal{U}_h \subset \mathcal{U}_A$ ($\mathcal{U}_h$
has been defined at the beginning of the section) and $g\in \mathcal{U}_h $.
Let us now prove that $g$ is transitive.

 Given an open set
$V\subset\T^{2}$, Lemma \ref{lema3} implies that there exist $y\in V$ and
$n_{_{V}}\in\N $ such that $g^{n}(V)\supset B(g^{n}(y),
r)$ for all $n\geq n_{_{V}}$ . Let  $B_1,...,B_n$ and $m\in \N$ be 
given by Lemma \ref{lema77}. As $diam (B_i)<r/4$, there exists $i_0$ such
that $g^{n_{_{V}}}(V)\supset B_{i_{0}}$. By Lemma \ref{lema77}, item (ii), $g^{m}(B_{i_{0}})=\T^{2}$
so  $g^{n_{_{V}}+m}(V)=\T^{2}$, which implies that $g$ es transitive.

\end{proof}

\noindent{\it{Acknowledgment}}: We would like to thank Matilde Martinez for helpful discussions and for many useful remarks on
mathematical structure, style, references, etc.
\\

\end{document}